\documentclass[11pt]{amsart}

\usepackage{amsmath,amsthm,amssymb}
\usepackage{amsfonts}
\usepackage{mathrsfs}
\usepackage[square,sort,comma,numbers]{natbib}

\usepackage{graphicx}
\usepackage[labelfont={bf}]{caption} 
\usepackage{subcaption}
\usepackage[usenames,dvipsnames]{xcolor}
\usepackage{float} 
\usepackage{setspace} 
\usepackage{fancyhdr}

\newtheorem{thm}{Theorem}[section]
\newtheorem{prop}[thm]{Proposition}
\newtheorem{lem}[thm]{Lemma}

\newtheorem{cor}[thm]{Corollary}

\newtheorem{lemma}[thm]{Lemma}

\newtheorem{defn}[thm]{Definition}
\let\olddefn\defn
\renewcommand{\defn}{\olddefn\normalfont}

\theoremstyle{definition}

\newcommand{\ibd}{\partial_{\infty}}
\newcommand{\ra}{\rightarrow}

\newcommand{\ep}{\epsilon}
\newcommand{\head}{\noindent \textbf}

\newcommand{\Z}{\mathbb{Z}}

\newcommand{\E}{\mathbb{E}}

\newcommand{\T}{\mathscr{T}}

\newcommand{\tbd}{\partial_{T}}
\newcommand{\bd}{\partial}

\makeatletter 
\@addtoreset{thm}{Chapter} 
\makeatother

\setcounter{secnumdepth}{3}
\setcounter{tocdepth}{3}


%
%

\pdfpagewidth=210mm
\pdfpageheight=297mm

\usepackage{tikz}
\usetikzlibrary{calc}
\usetikzlibrary{arrows}
\usetikzlibrary{decorations.pathreplacing}
\usetikzlibrary{intersections}

\begin{document}
\title{Actions of Right-angled Coxeter groups on the Croke Kleiner Spaces}
\author{Yulan Qing}

\address{Department of Mathematics \\ Technion - Israel Institute of Technology \\ Haifa, 32000 \\ Israel}
\email{yulan.qing@gmail.com}


\date{\today}

\subjclass[2000]{20F65}

\keywords{CAT(0) space, Right-angled Coxeter group, visual boundary}

\begin{abstract}
It is an open question whether right-angled Coxeter groups have unique group-equivariant visual boundaries. In \cite{ckpaper}, Croke and Kleiner present a right-angled Artin group with more than one visual boundary. In this paper we present a right-angled Coxeter group with non-unique equivariant visual boundary. The main theorem is that if right-angled Coxeter groups act geometrically on a Croke-Kleiner spaces constructed in \cite{ckpaper}, then the local angles in those spaces all have to be $\pi / 2$. We present a specific  right-angled Coxeter group with non-unique equivariant visual boundary.  However, we conjecture that the right angled Coxeter groups that can act geometrically on a given CAT(0) space are far from unique.
\end{abstract}

\maketitle

\section{Introduction}
The question that motivated this study is the uniqueness of visual boundaries for right-angled Coxeter groups. The open questions addressed are the following:
\begin{enumerate}
 \item Are all CAT(0) visual boundaries of a given right-angled Coxeter group (abstractly) homeomorphic? \\
\item  Are all CAT(0) visual boundaries of a given right-angled Coxeter group equivariantly homeomorphic? 
\end{enumerate}
In this paper we answer the second question negatively.  We show that the first question is related to the larger question of whether a certain length variation can change the homeomorphism type of the visual boundary of a  CAT(0) cube complex. In his thesis \cite{mob}, O'Brien carefully studied the existence and uniqueness of strict fundamental domains of right-angled Coxeter groups acting on CAT(0) spaces. A key lemma in this paper is largely an application of his main result.\\

To investigate the question of unique boundary, we consider the action of a right-angled Coxeter group on a family of spaces constructed by Croke-Kleiner. The family is constructed from a fixed CAT(0) cube complex $X$ by varying certain geometric data while preserving the CAT(0) property.  We assume that a right-angled Coxeter group acts geometrically on the spaces in the family.   We want to know whether varying some or all of the geometric data $(\theta_1, \theta_2, \theta_3, a, b, c, d)$ changes the equivariant homeomorphism type of its boundary. The main results imply two things. First, there are only certain geometric data changes that are allowed if we insist on having a right-angled Coxeter group acting on one of these spaces.  Second, these allowed changes do change the equivariant homeomorphism type of the boundary and we can find a right-angled Coxeter group $W$ that acts geometrically on this space thus answering the question about the uniqueness of equivariant visual boundaries.\\

The key step is the first result.  We prove that unlike in the case of right-angled Artin groups, the Croke-Kleiner space does not support a geometric action by a right-angled Coxeter group if the three angles on the tori $\theta_1, \theta_2, \theta_3$ are not fixed at $\pi/2$. \\

Take $X$ as defined in Section 3. There are many embedded flats in $X$. Among them are the flats that are universal covers of the tori $T_i$, which we call \textit{special flats}.
In this paper we prove that:

\begin{thm}
 Suppose $W$ is a right-angled Coxeter group acting geometrically on the Croke-Kleiner complex while preserving the family of special flats.  Then the angles $\theta_1, \theta_2, \theta_3$ must 
 all be right angles.
\end{thm}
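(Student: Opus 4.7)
My approach is to exploit the rigidity of reflection subgroups in right-angled Coxeter groups: any two reflections $s,t\in W$ (conjugates of generators) generate either $\Z/2\times\Z/2$ (walls meeting at a right angle) or $\Z/2*\Z/2\cong D_\infty$ (walls disjoint or parallel), since the reflection subgroup $\langle s,t\rangle$ is itself right-angled Coxeter. Consequently, whenever two reflection walls of $W$ meet, they do so orthogonally. Combining this with O'Brien's wall/strict-fundamental-domain theorem applied inside each special flat will force each $\theta_i=\pi/2$.

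First I would invoke O'Brien's theorem to equip $X$ with a $W$-invariant system of walls, one per reflection, cutting $X$ into copies of a strict fundamental domain. Fix a special flat $F\cong\R^2$ covering $T_i$, and let $W_F\leq W$ be its setwise stabilizer. Since $W$ permutes the special flats with finitely many orbits and acts cocompactly on $X$, the group $W_F$ acts cocompactly on $F$ by Euclidean isometries. Any wall of $W$ meets $F$ either trivially, in a single line, or in all of $F$; in the middle case the corresponding reflection restricts to a genuine planar reflection of $F$. Let $R_F\leq W_F$ denote the subgroup generated by these induced reflections.

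The key step is to show that $R_F$ still acts cocompactly on $F$. I would argue this by intersecting O'Brien's strict fundamental domain of $W$ on $X$ with $F$: the resulting walls partition $F$ into a strict fundamental domain for $W_F$, forcing $W_F=R_F$ modulo the subgroup fixing $F$ pointwise. Granted this, $R_F$ is a Euclidean planar reflection group whose walls meet orthogonally whenever they meet, so its fundamental chamber is a rectangle and its walls form two perpendicular families of parallel lines on $F$.

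To finish, observe that the sides of the parallelogram tiling of $F$ are the lines along which $F$ meets adjacent special flats; these lines are preserved by $W_F$, and the stabilizer of each adjacent special flat contributes a reflection whose wall runs along exactly the corresponding gluing line. Hence the gluing lines are reflection walls of $R_F$, meeting at angle $\theta_i$; by the orthogonality conclusion above, $\theta_i=\pi/2$, and the same argument gives $\theta_2=\theta_3=\pi/2$. The central obstacle I anticipate is the cocompactness of $R_F$ (equivalently, the identification $W_F=R_F$ modulo the pointwise stabilizer of $F$), which requires controlling elements of $W_F$ that preserve $F$ without being words in reflections whose walls cross $F$; here one must lean heavily on the uniqueness and wall-by-wall structure of O'Brien's strict fundamental domain.
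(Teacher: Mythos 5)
Your overall strategy---pass to the stabilizer $W_F$ of a special flat, show it acts as a Euclidean reflection group whose walls can only meet orthogonally because any two involutions in a right-angled Coxeter group generating a finite subgroup must commute, and then read off $\theta_i$ from the walls---is close in spirit to the paper's, which proves that $\mathrm{Stab}(\widetilde{T_i})$ is a special subgroup isomorphic to $D_\infty\times D_\infty$ acting by two perpendicular families of reflection lines (the Key Lemma). But your final step contains a genuine gap. You assert that ``the stabilizer of each adjacent special flat contributes a reflection whose wall runs along exactly the corresponding gluing line,'' and hence that the gluing lines $l_{2,1}$ and $l_{2,3}$ are themselves walls, so that they meet orthogonally. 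This is not justified, and it is exactly the point where the theorem's content lives. What one can extract from equivariance is only that each reflection of a flat must permute the discrete family of gluing lines, which forces its axis to be parallel or perpendicular to that family---or, crucially, to be a \emph{diagonal} of the parallelogram tiling when the reflection swaps the $l_{2,1}$-family with the $l_{2,3}$-family (an isometry of $X$ may interchange lifts of $T_1$ with lifts of $T_3$). In that diagonal configuration the two perpendicular wall families bisect the angle between the gluing-line families, which is perfectly consistent with $\theta_2\neq\pi/2$ and with your orthogonality principle. The paper's proof of Theorem 4.11 spends its entire second half ruling out precisely this configuration, by playing the reflections of an adjacent $T_1$-flat (whose axes meet $l_{2,1}$ perpendicularly or run parallel to it) against the putative diagonal axes in the $T_2$-flat and deriving a third, non-parallel axis that contradicts the $D_\infty\times D_\infty$ structure. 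Your proposal never engages with this case.

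The second gap is the one you flag yourself: the cocompactness of $R_F$, equivalently the identification of $W_F$ (modulo the pointwise stabilizer of $F$) with a planar reflection group. Acknowledging an obstacle is not the same as overcoming it, and the paper does real work here: Lemma 4.5 and Proposition 4.6 show that any element stabilizing a special flat is a word in generators each of which stabilizes that flat (using the strict fundamental domain on the nerve tree and the no-inversion lemma), so $\mathrm{Stab}(\widetilde{T_i})$ is a special subgroup; the Key Lemma then classifies the possible commuting configurations of order-two planar isometries (reflections and point rotations) to pin down $D_\infty\times D_\infty$ acting by two perpendicular families of lines. Note also that involutions of $W_F$ acting as planar reflections of $F$ need not be Coxeter reflections (conjugates of generators)---they can be products of commuting generators---so your opening dichotomy for ``reflections'' must be stated for arbitrary involutions with intersecting fixed sets (which still works, via proper discontinuity and the fact that finite subgroups of a right-angled Coxeter group are elementary abelian $2$-groups), but as written it does not yet yield the wall system you want.
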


This implies that the "right-angled" in the terminology "right-angled Coxeter group" turns out to be literal and is consistent with the "geometric" property of the group. Furthermore, given the result from \cite{flow} that if we fix the gluing angle of the Croke-Kleiner space at $\pi$/2 and change the side lengths of the tori, the resulting boundaries are not equivariantly homeomorphic to each other, we conclude the following:

\begin{cor}
There exists a right-angled Coxeter group that does not have unique equivariant visual boundary.
\end{cor}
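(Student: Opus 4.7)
The plan is to combine Theorem 1.1 with the rigidity-of-side-length result cited from \cite{flow}, by exhibiting a single right-angled Coxeter group $W$ that acts geometrically on two different Croke-Kleiner spaces with non-equivariantly-homeomorphic boundaries. The two spaces will lie in the subfamily where the angles $\theta_1,\theta_2,\theta_3$ are all fixed at $\pi/2$ and only the side-length parameters $(a,b,c,d)$ are varied. Theorem 1.1 is the reason this subfamily is the correct one to look at: any right-angled Coxeter group that acts geometrically on any Croke-Kleiner space while permuting the special flats must force the torus angles to be right angles, so the whole universe of RACG-admitting Croke-Kleiner spaces sits inside this subfamily.

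First I would exhibit an explicit right-angled Coxeter group $W$ and an explicit geometric action of $W$ on one Croke-Kleiner space $X_0$ with all angles equal to $\pi/2$ and some base side lengths $(a_0,b_0,c_0,d_0)$. The action I have in mind is combinatorial on the underlying cube complex: I would reflect across coordinate hyperplanes of the universal covers of the three tori and use the Croke-Kleiner gluing to check that these reflections extend to a properly discontinuous, cocompact, isometric action by a right-angled Coxeter group whose defining graph I can read off from the stabilizers at the identified circles. Next, because this action is combinatorial, the same group $W$ acts geometrically on the Croke-Kleiner space $X_{(a,b,c,d)}$ obtained from the same combinatorial data with any other positive side lengths, provided the angles remain $\pi/2$; reflections remain isometries after rescaling each torus direction independently. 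This produces a whole family of geometric $W$-actions parametrized by $(a,b,c,d)\in\mathbb{R}_{>0}^4$.

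Finally, I would invoke the result from \cite{flow}: within this right-angled subfamily, the equivariant homeomorphism type of the visual boundary depends nontrivially on $(a,b,c,d)$, so there exist parameters $(a_1,b_1,c_1,d_1)$ such that $\partial_\infty X_{(a_0,b_0,c_0,d_0)}$ and $\partial_\infty X_{(a_1,b_1,c_1,d_1)}$ are not $W$-equivariantly homeomorphic. Since $W$ acts geometrically on both spaces, this exhibits a right-angled Coxeter group with at least two distinct equivariant visual boundaries, which is the content of the corollary.

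The step I expect to be the main obstacle is the explicit construction in the first paragraph: I need to find a reflection system on $X_0$ whose generated group is genuinely a right-angled Coxeter group (rather than merely a Coxeter or reflection group with non-right relations) and whose action is cocompact on the whole Croke-Kleiner complex, not just on each torus piece separately. The compatibility at the gluing circles is delicate, because reflections coming from the three different tori must either commute or be independent as elements of $W$; verifying that no unexpected relations are forced by the gluing, so that the abstract Coxeter presentation really matches the group generated geometrically, is where most of the care will go. Once that check is in hand, steps two and three are essentially formal consequences of the invariance of the combinatorial action under rescaling and of the already-established result in \cite{flow}.
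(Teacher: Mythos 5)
Your proposal follows essentially the same route as the paper: fix the angles at $\pi/2$ (justified by Theorem 1.1), exhibit an explicit right-angled Coxeter group acting geometrically by reflections on one Croke-Kleiner space, observe that the action survives independent rescaling of the side lengths, and invoke the cited result that distinct length data yield non-equivariantly-homeomorphic boundaries. The "main obstacle" you flag is exactly what the paper's Section 5 supplies --- the group defined by the two nested $4$-cycles, with decomposition $W = H *_{D_{\infty}} H *_{D_{\infty}} H$ for $H = D_{\infty}\times D_{\infty}$, whose Cayley graph embeds dually into the complex --- so your plan is the paper's proof.
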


Aside from answering the open question, this close examination of the interplay between right-angled Coxeter actions and the CAT(0) geometry of the Croke-Kleiner spaces shows that right-angled Coxeter groups can be more "geometrically rigid" than their counterparts in the class of right-angled Artin groups.\\

The more general open questions we aim to contribute to is the following:  
\begin{enumerate}
\item Suppose $X$ is a CAT(0) square complex.  If we  change the cubes to rectangles, does the homeomorphism type of the boundary change? 
\item  If one has a group acting geometrically on $X$, then does the equivariant homeomorphism type of the boundary change when we change cubes to rectangles?  \\
\end{enumerate}
\section{Preliminaries}
In this section we give basic definitions and facts concerning CAT(0) spaces, boundaries and quasi-isometries all of whose proofs can be found in \cite{thebible}.   We also give the definitions and facts we need concerning right-angled Artin and Coxeter groups.

\subsection{CAT(0) Spaces and their boundaries}


%

A metric space $X$ is CAT(0) if geodesic triangles in $X$ are at least as thin as a triangle in Euclidean space with the same side lengths.  
It follows immediately from the definition that CAT(0) spaces are uniquely geodesic and thus contractible via geodesic retraction to a base point in the space.\\

Recall that a metric space $X$ is {\it proper} if closed metric balls are compact.  In this case, $X$ can be compactified via the \textit{visual boundary} of $X$.  The points of this boundary are equivalence classes of geodesic rays defined as follows:\\

A \textit{geodesic ray} in $X$ is a geodesic $c: [0, \infty) \ra X$. Consider the set of geodesic rays in $X$. Two geodesic rays $c_1$ and $c_2$ are said to be \textit{asymptotic} if $f(t): = d(c_1(t), c_2(t))$ is a bounded function.  The set of equivalence classes is denoted by $\bd X$ and called the \textit{visual boundary} of $X$. If $\xi \in \bd X$ and $c$ is a geodesic ray belonging to $\xi$, we write $c(\infty) = \xi$.\\

The following is a basic lemma in CAT(0) geometry:

\begin{lemma} For any $\xi \in \bd X$ and any $x \in X$, there is a unique geodesic ray $c_{x \xi}: [0, \infty) \ra X$ with $c_{x \xi}(0) = x$ and $c_{x \xi}(\infty) = \xi$. The image of $c_{x \xi}$ is denoted by $x \xi$.\\
\end{lemma}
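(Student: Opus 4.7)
The plan is to establish existence and uniqueness separately, both leaning on the CAT(0) condition and the convexity of the distance function.

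For existence, I would pick any representative geodesic ray $c: [0,\infty) \ra X$ of $\xi$. For each positive integer $n$ large enough, let $\gamma_n: [0, L_n] \ra X$ be the unique geodesic from $x$ to $c(n)$, where $L_n = d(x, c(n))$. Since $L_n \to \infty$, for each fixed $t \geq 0$ the point $\gamma_n(t)$ is eventually defined. The key step is to show $\{\gamma_n(t)\}$ is a Cauchy sequence. For $m > n$ large, I would apply the CAT(0) inequality to the triangle $x, c(n), c(m)$: the triangle inequality forces $|L_m - L_n| \leq m-n$, and in the Euclidean comparison triangle the two sides from the vertex corresponding to $x$ diverge only at a controlled rate, so the comparison points at parameter $t$ are within a distance that tends to zero as $n, m \to \infty$. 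The CAT(0) condition transfers this to a bound on $d(\gamma_n(t), \gamma_m(t))$. Since $X$ is proper, hence complete, the Cauchy sequence converges, and the assignment $t \mapsto \lim_n \gamma_n(t)$ defines a map that is easily checked to be a geodesic ray $c_{x\xi}$ with $c_{x\xi}(0) = x$. A further application of CAT(0) thinness to the triangle with vertices $x, c(0), c(n)$, and then passing $n \to \infty$, bounds $d(c_{x\xi}(t), c(t))$ uniformly in $t$, so $c_{x\xi}$ is asymptotic to $c$ and thus represents $\xi$.

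For uniqueness, suppose $c_1, c_2: [0,\infty) \ra X$ are geodesic rays with $c_1(0) = c_2(0) = x$ and both asymptotic to $\xi$. I would study the function $f(t) := d(c_1(t), c_2(t))$. Three properties suffice: $f(0) = 0$; $f$ is bounded (by the definition of asymptotic); and $f$ is convex on $[0,\infty)$, which is the standard CAT(0) consequence of convexity of the metric along pairs of geodesics sharing an endpoint. A convex nonnegative function on $[0,\infty)$ vanishing at $0$ and bounded by some $M$ must vanish identically: for any $s > 0$ and $t > s$, convexity with $f(0) = 0$ gives $f(s) \leq (s/t) f(t) \leq (s/t) M$, and letting $t \to \infty$ forces $f(s) \leq 0$. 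Hence $c_1 \equiv c_2$.

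The main obstacle — more technical than conceptual — is the Cauchy estimate in the existence step, where one must carefully compare the two geodesic segments $\gamma_n$ and $\gamma_m$ using the CAT(0) inequality against a Euclidean comparison triangle whose two long sides stay close together. Everything else is convexity bookkeeping, and properness is used only to upgrade the Cauchy condition to convergence.
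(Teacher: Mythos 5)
The paper offers no proof of this lemma; it is stated as a standard fact with proofs deferred to \cite{thebible}, and your argument is precisely the standard one (Bridson--Haefliger, Prop.\ II.8.2): Cauchy convergence of the geodesics $\gamma_n$ from $x$ to $c(n)$ for existence, and convexity of $t\mapsto d(c_1(t),c_2(t))$ for uniqueness. Both halves are correct. The one detail you gloss over (and correctly flag) in the Cauchy step is that the upper bound $|L_m-L_n|\le m-n$ alone is not enough; you also need the lower bound $L_m-L_n\ge (m-n)-2k$ with $k=d(x,c(0))$, so that the comparison triangle for $(x,c(n),c(m))$ has angle $\theta$ at $\bar x$ satisfying $2L_nL_m(1-\cos\theta)=(m-n)^2-(L_m-L_n)^2\le 4k(m-n)$, whence $d(\gamma_n(t),\gamma_m(t))\le t\sqrt{2(1-\cos\theta)}\le t\sqrt{8k/L_n}\to 0$ uniformly in $m>n$. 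With that inserted, the proof is complete.
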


There are two topologies we can put on $\bd X$.
Set $\overline{X} = X \bigcup \bd X$. The \textit{cone topology} on $\overline{X}$ has as a basis the open sets of $X$ together with the sets
\begin{center}
$U(x, \xi, R, \ep) = \{ z \in \overline{X} | x \notin B(x, R), d(c_{xz}(R), c_{x\xi}(R)) < \ep \}$
\end{center}
where $x \in X$, $\xi \in \bd X$ and $R > 0, \ep >0$. The topology on $X$ induced by the cone topology coincides with the metric topology on $X$.\\

This topology looks as if it depends on the base-point $x$ in the above description of open sets, however the previous lemma shows that there is a natural change of base-point homeomorphism when the base-point is changed.  \\

\begin{figure}[h]
\begin{center}
\begin{tikzpicture}[scale=0.6]

\node (x) [circle,fill,inner sep=1pt,label=180:$x_0$] at (0,0) {};
\draw [name path=circle] (0,0) circle (3);

\draw [name path=line,thin] (0,0) to [bend right=20] (5,2.5);
\draw [thin] (0,0) to [bend left=20] (5,-2.5);
\draw (0,0) to (5,0) node [right] {$\xi$};

\draw [very thin,
	decorate,
	decoration={brace,mirror,amplitude=20pt}] 
	(0,0) -- (3,0) node [midway,yshift=-30pt] {$r$};

\draw [very thin,
	name intersections={of=line and circle},
	decorate,
	decoration={brace,amplitude=7pt}] 
	(intersection-1) -- (3,0) node [midway,xshift=12pt] {$\epsilon$};

\end{tikzpicture}
\end{center}
\caption{A basis for open sets}
\label{}
\end{figure}
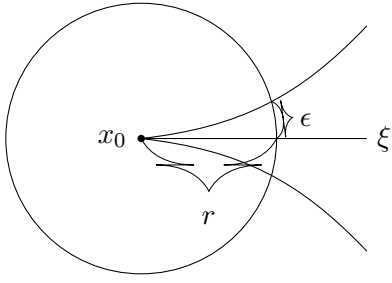

The set $\bd X$ together with the cone topology is called the \textit{visual boundary} of $X$, denoted $\ibd X$.\\

There is another way to put a topology on $\bd X$ using the Tits metric.  Let $c_1, c_2: [0, \infty) \ra X$ be two geodesic rays with $c_1(0) = c_2(0) = x$ which belongs to two equivalent classes  $\xi, \eta$. Let $\angle_x (\xi, \eta)$ denote the \textit{Alexandrov angle} between the rays $\xi$ and $\eta$. For $t_1, t_2 \in (0, \infty)$, $\angle_x(\xi, \eta)$ is defined to be
$$
\angle_x(\xi, \eta) = \lim_{t_1, t_2 \rightarrow 0} \overline{\angle}_{x} (c_1(t_1), c_2(t_2)
$$

Where $\overline{\angle}_{x} (c_1(t_1), c_2(t_2)$ denotes the angle at the vertex $x$ in a comparison triangle $\overline{\Delta}(x, c_1(t_1), c_2(t_2)) \subset \E^2$.  
 
If $X$ is a complete $CAT(0)$ space, then the angle $\angle(\xi, \eta)$ between $\xi, \eta$ is defined to be
$$
\angle(\xi, \eta) = \sup_{x \in X} \angle_x (\xi, \eta)
$$
It can be shown that the angle function is a metric on the equivalence classes of all geodesic rays and we define the \textit{Tits metric} to be the length metric associated to the angular metric, denoted $d_T$.  The set $\bd X$ together with the topology arising from $d_T$ is called the \textit{Tits boundary} of $X$, denoted $\tbd X$.\\

In general the visual boundary of the space is different from the Tits boundary of the space. For example, 
consider $X=\mathbb{H}^2$.  For any two points $\xi, \eta$ on the boundary, there is a geodesic line in $\mathbb{H}^2$ joining $\xi$ and $\eta$. Therefore $\angle (\xi, \eta) = \pi$ for any two distinct points one the boundary, hence $\tbd \mathbb{H}^2$ is a discrete set yet $\partial_{\infty}\mathbb H^2$ is homeomorphic to $S^1$ with the cone topology.\\ 

\subsection{Quasi-Isometry and Quasi-Isometric Embeddings}



\begin{defn}
Let $(X_1 , d_1 )$ and $(X_2 , d_2 )$ be metric spaces. A
(not necessarily continuous) map $f : X_1 \ra X_2$ is called a $(\lambda, \epsilon)$-\textit{quasi-isometric
embedding} if there exist constants $\lambda \geq 1$ and $\epsilon \geq 0$ such that for all $x, y \in X_1$
$$
\frac{1}{\lambda} d_1 (x, y) - \epsilon  \leq d_2 (f (x), f (y)) \leq \lambda d_1 (x, y) + \epsilon
$$

If, in addition, there exists a constant $C \geq 0$ such that every point of $X_2$ lies in the
$C$-neighborhood of the image of $f$ , then $f$ is called a $(\lambda, \ep)$-quasi-isometry.
When such a map exists, $X_1$ and $X_2$ are said to be \textit{quasi-isometric}.\\
\end{defn}

The following is often referred to as the Fundamental Theorem of Geometric Group Theory:

\begin{thm}($\check{S}$varc-Milnor lemma)
If $X$ is a complete, locally compact geodesic metric space and $G$ acts geometrically on $X$, 
then $G$ is finitely generated and X is quasi-isometric to every Cayley graph for
$G$.
\end{thm}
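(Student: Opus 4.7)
The plan is to prove both conclusions at once by producing an explicit quasi-isometry from $G$ (with any word metric coming from a finite generating set) to $X$, and to do this I first manufacture a candidate finite generating set from the geometry. Fix a basepoint $x_0 \in X$. Because the action is cocompact, there exists $R > 0$ with $G \cdot \overline{B}(x_0, R) = X$. Set $r := 2R + 1$ and define
\[
S := \{\, s \in G \setminus \{1\} \;:\; s \cdot \overline{B}(x_0, r) \cap \overline{B}(x_0, r) \neq \emptyset \,\}.
\]
Since $X$ is locally compact and complete, $\overline{B}(x_0, r)$ is compact, and since the $G$-action is proper, only finitely many $g \in G$ move $\overline{B}(x_0, r)$ so that it meets itself. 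Hence $S$ is finite. I take this as the candidate generating set.

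The next step is to show $S$ generates $G$ by the standard geodesic-chain argument. Given any $g \in G$, let $\gamma : [0, L] \to X$ be the geodesic from $x_0$ to $g x_0$, where $L = d(x_0, g x_0)$. Choose points $0 = t_0 < t_1 < \cdots < t_n = L$ with $t_{i+1} - t_i \leq 1$, and for each $i$ pick $g_i \in G$ with $d(g_i x_0, \gamma(t_i)) \leq R$ (possible by cocompactness, with $g_0 = 1$ and $g_n = g$). Then $d(g_i x_0, g_{i+1} x_0) \leq 2R + 1 = r$, so $g_i^{-1} g_{i+1} \in S \cup \{1\}$. Writing $g = (g_0^{-1} g_1)(g_1^{-1} g_2)\cdots (g_{n-1}^{-1} g_n)$ expresses $g$ as a product of at most $n \leq L + 1$ elements of $S$. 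In particular $G = \langle S \rangle$ is finitely generated, and along the way I have obtained the key inequality
\[
|g|_S \;\leq\; d(x_0, g x_0) + 1,
\]
where $|\cdot|_S$ is the word length.

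For the reverse inequality, set $D := \max_{s \in S} d(x_0, s x_0) < \infty$. For any $g \in G$, write $g = s_1 s_2 \cdots s_k$ with $k = |g|_S$; by the triangle inequality and $G$-invariance of $d$,
\[
d(x_0, g x_0) \;\leq\; \sum_{i=1}^k d(s_1 \cdots s_{i-1} x_0,\, s_1 \cdots s_i x_0) \;=\; \sum_{i=1}^k d(x_0, s_i x_0) \;\leq\; D\,|g|_S.
\]
Combining these two bounds shows that the orbit map $\phi : G \to X$, $g \mapsto g x_0$, satisfies
\[
\tfrac{1}{D}\bigl(|g^{-1} h|_S\bigr) \;\leq\; d(\phi(g), \phi(h)) \;\leq\; |g^{-1} h|_S \cdot \max(D, 1) + 1,
\]
after applying the above to $g^{-1} h$ and using left-invariance. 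Finally, cocompactness says every point of $X$ lies within $R$ of some orbit point, so $\phi$ is $R$-coarsely surjective. Extending $\phi$ in the obvious way to the Cayley graph (by sending each edge to a geodesic) produces the desired quasi-isometry. Since any two finite generating sets give bi-Lipschitz equivalent word metrics, this proves $X$ is quasi-isometric to every Cayley graph of $G$.

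The main obstacle is the finiteness of $S$: everything else is bookkeeping with the triangle inequality, but finiteness genuinely uses all three hypotheses of a geometric action together with local compactness, since without local compactness the ball $\overline{B}(x_0, r)$ need not be compact and the properness hypothesis (preimages of compacts under $G \times X \to X \times X$ are compact) would not directly force $S$ to be finite.
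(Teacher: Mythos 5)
The paper does not prove this statement: it records the \v{S}varc--Milnor lemma as a standard fact and defers all proofs in that section to \cite{thebible}, so there is no in-paper argument to compare against. Your proof is the standard one (orbit map plus geodesic-chain argument), and it is essentially correct: Hopf--Rinow gives compactness of $\overline{B}(x_0,r)$ from completeness and local compactness, proper discontinuity then makes $S$ finite, the chain along a subdivided geodesic shows $S$ generates and gives $|g|_S \leq d(x_0,gx_0)+1$, and the triangle inequality gives $d(x_0,gx_0)\leq D\,|g|_S$. The only flaw is cosmetic: your final displayed inequality is mis-assembled. From the two bounds you actually proved, the correct sandwich is $|g^{-1}h|_S - 1 \leq d(\phi(g),\phi(h)) \leq D\,|g^{-1}h|_S$; the multiplicative constant belongs on the upper bound and the additive loss on the lower bound, not the other way around. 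As written, $\tfrac{1}{D}|g^{-1}h|_S \leq d(\phi(g),\phi(h))$ can fail (e.g.\ for a generator fixing $x_0$). This does not affect the conclusion, since the corrected sandwich is exactly a $(\max(D,1),1)$-quasi-isometric embedding, and coarse surjectivity follows from cocompactness as you say.
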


\subsection{Right-angled Groups}

\begin{defn}
An \textit{Artin group} A is a group with presentation of the form
$$
A = \langle s_1, s_2 . . , s_n | ( s_i s_j )^{m_{ij}} = (s_j s_i)^{m_{ji}}
\text{ for all } i \neq j \rangle
$$
where $m_{ij} = m_{ji} \in \{2,3,...\infty \}$. $(s_i s_j)^{m_{ij}}$ denotes an alternating product of $s_i$ and $s_j$ of length $m_{ij}$, beginning with $s_i$. If $m_{ij} = \infty$ , then there is (by convention) no relation for $s_i$ and $s_i$.

 A \textbf{right-angled Artin group} 
\cite{raag} is one in which $m_{ij} \in \{2, \infty \}$ for all
$i, j$. In other words, in the presentation for the Artin group, all relations are commutator
relations: $$s_i s_j  = s_j s_i$$

\end{defn}

The easiest way to specify a presentation for a right-angled Artin group
is by means of a defining graph. This is a graph whose vertices are labeled by the
generators $S = \{ s_1, . . . , s_n \}$ and whose edges connect pairs of vertices 
$s_i, s_j$ if and only if
$m_{ij} = 2$. Note that any finite, simplicial graph $\Gamma$ is the defining graph for a right-angled Artin group.  

\begin{defn}
Formally, a \textit{Coxeter group} can be defined as a group with a presentation of the following form:
$$
\langle s_1, s_2,...s_n | (s_i)^2 = 1, (s_i s_j)^{m_{ij}} = 1, \text{ where } m_{ij} \in \{2, 3, 4,...\infty \}  \rangle
$$

A \textbf{right-angled Coxeter Group} \cite{localcon2} is where $m_{ij} \in \{2, \infty \}$

\end{defn}

Just as for right-angled Artin groups, the presentation for a right-angled Coxeter group can be given by a finite simplicial graph with the understanding that each vertex now represents a generator of order 2.  \\

Here are more basic facts of right-angled Coxeter Groups \cite{moracg}:

\begin{itemize}
\item  If $s_i$ is not adjacent to $s_j$, then the order of $s_i s_j$ is 
infinite.
\item A right-angled Coxeter group is abelian if and only if it is
finite which is true if and only if the defining graph is complete.  
\item If $w$ has
finite order, then $w^2 = 1$. Right-angled Coxeter groups are distinguished from other Coxeter groups by this fact. That is, if
every finite order element of a Coxeter group is two, then the Coxeter
group is right-angled.
\item A right-angled Coxeter group $W$ has a non-trivial center if and only if it can
be written as $W'′ \times \Z_2$ for a right-angled Coxeter group $W'$.

\end{itemize}

\subsection{Strict Fundamental Domain}

In this section, we suppose that $G$ is a group acting on a metric space $X$ by isometries. \\

The group of all isometries from a metric space $(X, d)$ to itself will be denoted $\textrm{Isom}(X)$. If 
$G \subset \textrm{Isom}(X)$, then we say that $G$ acts on $X$ by isometries.

\begin{defn}
A group $G$ acts \textit{geometrically} on a metric space $X$ if $G$ acts properly discontinuously, cocompactly, and by isometries.
\end{defn}

\begin{defn}  Suppose $G$ is a group acting on a metric space $X$. The \textit{fixed point set} of $S \subset G$ on the space $X$ is the set
$$
\text{Fix}(S):=\{ x \in X | g.x = x \text{ for all } g \in S \}
$$ 
\end{defn}

For each point $x \in X$, the \textit{orbit} of $x$ is the set
$$
\mathcal O_x:=\{ y \in X | y = g·x \text{ for some } g \in G\}
$$

\begin{defn}
Let $K$ be a closed subset of $X$.  $K$ is a
\textit{strict fundamental domain} of $G$ on $X$ if every orbit meets $K$ exactly once.
\end{defn}

According to Bass-Serre Theory, in the case of a group acting on a tree, a strict fundamental domain determines an amalgamated product decomposition of the group.  Conversely, if the original group has an amalgamated product decomposition, then there is a tree (unique up to isomorphism) on which $G$ acts, with an edge as the strict fundamental domain. The critical tools to our study come from the thesis of O'Brien.\\

\begin{prop}\cite{mob} 

Suppose $G$ acts geometrically on a uniquely geodesic space $X$ with a
strict fundamental domain $K$ whose translates are locally finite. Then

\begin{enumerate}
\item $K$ is convex, and
\item the the quotient $X / G$ is isometric to $K$
\end{enumerate}
\end{prop}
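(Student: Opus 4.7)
The plan is to prove both claims by a single ``folding'' construction that exploits the strict fundamental domain hypothesis together with local finiteness of the translates. Informally, any geodesic in $X$ can be cut into finitely many pieces, each lying in some translate $g_i K$, and those pieces can be pulled back to $K$ by the $g_i^{-1}$ to produce a path in $K$ of the same total length; unique geodesity will then force this path to coincide with the original geodesic.

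For (1), I would fix $x, y \in K$ and let $\gamma \colon [0, d(x,y)] \to X$ be the unique geodesic from $x$ to $y$. Since $\gamma$ is compact and the translates $\{gK : g \in G\}$ are locally finite, only finitely many translates $g_0 K = K, g_1 K, \ldots, g_n K$ meet $\gamma$. I would partition $[0, d(x,y)]$ into consecutive subintervals $[t_{i-1}, t_i]$ with $\gamma([t_{i-1}, t_i]) \subset g_i K$, and apply $g_i^{-1}$ to each piece to obtain a geodesic arc in $K$. At each transition point $z_i = \gamma(t_i) \in g_i K \cap g_{i+1} K$, both $g_i^{-1} z_i$ and $g_{i+1}^{-1} z_i$ lie in $K$ and in the same $G$-orbit, so by the strict fundamental domain property they must coincide. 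The folded arcs therefore concatenate into a continuous path $\widetilde{\gamma}$ in $K$ starting at $x$; by the same orbit-uniqueness argument applied to $y$ and $g_n^{-1} y$, the terminal endpoint is $y$. Since $G$ acts by isometries, the length of $\widetilde{\gamma}$ equals that of $\gamma$, namely $d(x,y)$, and unique geodesity of $X$ forces $\widetilde{\gamma} = \gamma$. Hence $\gamma \subset K$.

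For (2), let $\pi \colon X \to X/G$ be the quotient map. Since $K$ meets each orbit exactly once, $\pi|_K$ is a bijection onto $X/G$, so it suffices to check the isometry identity
$$
d_{X/G}(\pi x, \pi y) \;=\; \inf_{g \in G} d(x, g y) \;=\; d(x,y)
$$
for $x, y \in K$. The inequality $\leq$ is immediate by taking $g = e$. For the reverse, for each $g \in G$ I would apply the folding procedure of (1) to the geodesic from $x$ to $gy$: the output is a continuous path in $K$ from $x$ to the unique $K$-representative of the orbit of $gy$, which is $y$, and whose length is $d(x, gy)$. Since (1) places the geodesic from $x$ to $y$ inside $K$, with length $d(x,y)$, we conclude $d(x,y) \leq d(x, gy)$ for every $g \in G$. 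Combining the two inequalities gives the identity, so $\pi|_K$ is an isometry.

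The principal obstacle is verifying rigorously that the folded arcs fit together continuously; this depends entirely on the uniqueness of orbit representatives in $K$ at each transition point, which is exactly what the strict fundamental domain hypothesis supplies. A secondary technical point is that a concatenation of geodesic segments need not itself be a geodesic, but because the folded path joins $x$ to $y$ and has length equal to $d(x,y)$, unique geodesity bridges the gap with no further work. Local finiteness enters only to ensure the partition of $\gamma$ into finitely many pieces, and it is what allows the folding procedure to be carried out constructively rather than by a limiting argument.
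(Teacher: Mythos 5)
The paper does not actually prove this proposition---it is quoted from O'Brien's thesis \cite{mob}---so there is no in-text argument to compare against; I can only assess your proof on its own terms. Your folding strategy is the right one, but one step fails as written: the claim that $[0,d(x,y)]$ can be partitioned into finitely many \emph{consecutive} subintervals $[t_{i-1},t_i]$ with $\gamma([t_{i-1},t_i])\subset g_iK$. Local finiteness and compactness only give you that finitely many closed sets $A_i=\gamma^{-1}(g_iK)$ cover the parameter interval, and finitely many closed sets covering an interval need not admit such a consecutive partition: take $A_1=\{0\}\cup\bigcup_{n\ge1}\left[\frac{1}{2n+1},\frac{1}{2n}\right]$ and $A_2=\{0\}\cup\bigcup_{n\ge1}\left[\frac{1}{2n+2},\frac{1}{2n+1}\right]$; these are closed and cover $[0,\tfrac12]$, yet no initial interval $[0,\epsilon]$ lies in either one. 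The $A_i$ \emph{are} subintervals once you know the translates of $K$ are convex---but that is the conclusion of part (1), so positing the partition is close to circular. You identify the transition-point continuity as the principal obstacle, but that part is actually fine (strictness handles it exactly as you say); the real obstacle is the existence of the decomposition itself.

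The repair keeps your idea but discards the partition. Define the folding map $\phi(z)=$ the unique point of $Gz\cap K$; on $g_iK$ it agrees with $g_i^{-1}$, and strictness makes these choices consistent on overlaps, so $\widetilde\gamma=\phi\circ\gamma$ is continuous by the pasting lemma applied to the finitely many closed sets $A_i$ (no consecutiveness needed). For the length bound, prove $d(\widetilde\gamma(s),\widetilde\gamma(u))\le u-s$ by a connectedness argument: the set of $u$ satisfying this inequality is closed, and its supremum $u^*$ can always be pushed to the right because the union of those $A_j$ not containing $u^*$ is closed and misses a neighborhood of $u^*$, forcing some $A_i\ni u^*$ to contain points immediately after $u^*$, where $\widetilde\gamma$ is isometric. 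Then $\widetilde\gamma$ is a path in $K$ from $x$ to $y$ of length at most $d(x,y)$, hence a geodesic, hence equal to $\gamma$ by unique geodesity; this gives (1), and your argument for (2) then goes through essentially verbatim, modulo the routine check that the quotient pseudometric of an isometric action equals $\inf_{g}d(x,gy)$.
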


The full strength of O'Brien's result generalizes the Bass-Serre theory from trees to other spaces on which a strict fundamental domain can be found.  By O'Brien\cite{mob}, the action of a Coxeter group $W$ on a space $X$ has a strict fundamental domain if and only if for every $x \in X$ and $w \in W$, every path from $x$ to $w.x$ meets the fixed point set of $X$.  When the action is on a tree, this condition(which is called \textit{generalized reflection}) is satisfied.  Once we obtain a strict fundamental domain $K$, we can study the stabilizer groups of the topological boundary of $K$ and recover an amalgamated product decomposition of the group. 

Specific to our result, we let a right-angled Coxeter group act on the nerve tree of a Croke-Kleiner space (terms to be defined in the next section) and use the following theorem:

\begin{thm}\cite{mob}
If a $G$ acts geometrically on a tree, then there is a strict fundamental domain that is a finite sub-tree.
\end{thm}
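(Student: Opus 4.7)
The plan is to chain together the two preceding results of O'Brien cited in the excerpt. The target is to show three things in sequence: existence of a strict fundamental domain $K$; convexity (hence sub-tree structure) of $K$; and finiteness of $K$. The group $G$ here is understood (from the surrounding discussion) to be a Coxeter group, for which O'Brien's reflection-type criterion is the right tool.

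First I would verify the generalized reflection hypothesis from the tree geometry. Every isometry $w$ of a tree either fixes a non-empty convex sub-tree, or acts hyperbolically along a unique axis. In the first case the unique geodesic from $x$ to $w \cdot x$ must enter the fixed sub-tree. In the Coxeter setting the hyperbolic case decomposes as a product of involutions, each of which fixes a point (mid-edge or vertex) on the axis, so every path from $x$ to $w \cdot x$ must cross one of these fixed points. This verifies the hypothesis ``every path from $x$ to $w \cdot x$ meets $\mathrm{Fix}(W)$'' stated in the text, and O'Brien's existence criterion then produces a strict fundamental domain $K$ for the action.

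Next I would apply the earlier Proposition of O'Brien. Proper discontinuity of the geometric action makes the translates of $K$ locally finite, and the tree is uniquely geodesic, so the Proposition yields both that $K$ is convex and that $T/G$ is isometric to $K$. Convex subsets of a tree are themselves sub-trees, so $K$ is a sub-tree.

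Finally I would deduce finiteness from cocompactness: $T/G$ is compact, so $K$ is a compact sub-tree of $T$. Because the action is proper and cocompact, the tree is locally finite (vertex stabilizers are finite and there are finitely many orbits of incident edges at each vertex), and a compact sub-tree of a locally finite simplicial tree has only finitely many vertices and edges. The main obstacle is the first step: verifying the generalized reflection condition on a tree. Once that is in place, the rest is a routine combination of O'Brien's Proposition with the compactness forced by cocompactness.
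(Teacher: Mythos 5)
The paper does not actually prove this statement: it is quoted verbatim from O'Brien's thesis \cite{mob}, and the surrounding text even asserts without argument that the generalized reflection condition ``is satisfied'' when the action is on a tree. So there is no in-paper proof to compare against, and your proposal has to stand on its own. Its overall architecture is the natural one and matches the scaffolding the paper sets up: verify O'Brien's reflection criterion to obtain a strict fundamental domain $K$, invoke the Proposition quoted earlier from \cite{mob} to get convexity of $K$ and $X/G$ isometric to $K$, and use cocompactness together with local finiteness of the tree to conclude that $K$ is a finite sub-tree. You are also right to flag that $G$ must implicitly be a Coxeter group: as literally stated the theorem is false, since $\Z$ acting on $\R$ by unit translations is geometric but no closed set meets every orbit exactly once.

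The genuine gap is in your first step, in the hyperbolic case. You assert that a hyperbolic $w$ ``decomposes as a product of involutions, each of which fixes a point (mid-edge or vertex) on the axis.'' Neither half of this is justified. Writing $w = s_{i_1}\cdots s_{i_k}$ in the generators produces involutions whose fixed subtrees can lie far from the axis of $w$; and not every hyperbolic element is a product of two reflections (in $\Z/2 * \Z/2 * \Z/2$ the element $abc$ maps to $1$ under the parity homomorphism, so it is not a product of two conjugates of generators, yet it acts hyperbolically on the Bass--Serre tree). Moreover, even granting that each sub-segment of the broken path $x,\ s_{i_1}x,\ s_{i_1}s_{i_2}x,\ \dots,\ wx$ meets a reflection's fixed set, this does not yet show that the geodesic $[x,wx]$ does: in a tree every path from $x$ to $wx$ \emph{contains} $[x,wx]$, not the other way around, and the wall crossings of the broken path may all occur in its backtracking portions. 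What is actually needed is a separation statement --- some reflection $r$ has $x$ and $wx$ in different components of $T\setminus \mathrm{Fix}(r)$ --- and establishing that requires a genuine induction on word length (or the deletion condition for Coxeter groups), which is precisely the content one is deferring to O'Brien for. Your remaining steps (convexity of $K$ and hence sub-tree structure via the Proposition, compactness of $K$ from the isometry with the compact quotient, and local finiteness of the tree from properness plus cocompactness) are sound.
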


\section{Croke-Kleiner Spaces}

It is well-known that if $A$ a right-angled Artin group, then $A$ acts geometrically on a CAT(0) cube complex $X(A)$.  The spaces constructed here come from the cube complex for a specific right-angled Artin group whose defining graph is:

\begin{figure}[h]
\begin{center}
\begin{tikzpicture}[scale=1.0]

\node (v1) [circle,fill,inner sep=2pt] at (0,0) {};
\node (v2) [circle,fill,inner sep=2pt] at (1,0) {};
\node (v3) [circle,fill,inner sep=2pt] at (2,0) {};
\node (v4) [circle,fill,inner sep=2pt] at (3,0) {};

\draw (v1) -- (v2) -- (v3) -- (v4);

\end{tikzpicture}
\end{center}
\caption{}
\label{}
\end{figure}

The cubical space $X(A)$ used by Croke and Kleiner in \cite{ckpaper} is the universal cover
of a torus complex $Y$. Start with a flat torus $T_2$ with the property that a pair $b$, $c$
of unoriented, $\pi_{1}$-generating simple closed curves in $T_2$ meets at a single point at an angle $\alpha= \frac{\pi}{2}$. Let $b, c$ have length 1. Let $T_1, T_3$ be flat tori containing simple closed essential loops, $a, b_1$ and $c_1, d$,  respectively, such that $length(b_1) = length(b)$, $length(c_1) = length(c)$. Let $a, d$ also have length 1. Let $Y$ be the union of $T_1, T_2, T_3$ with $b_1$ identified isometrically with $b$ and $c_1$ with $c$. Let $X$ be the universal cover
of $Y$. Let $Y_1 = T_1 \cup T_2 $, and let $Y_2 = T_3 \cup T_2 $, $X_i$ be the universal cover of $Y_i$ in $X$.  That $X$ is CAT(0) cubical follows from the Equivariant Gluing theorem \cite{thebible}.\\

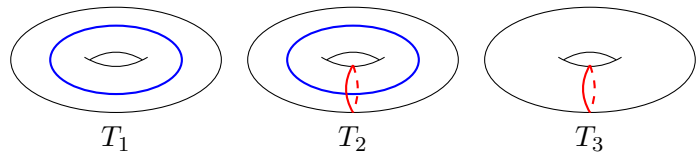
\begin{figure}[h]
\begin{center}
\begin{tikzpicture}[scale=0.35]

\draw (-1,0) to [bend left] (1,0);
\draw (-1.2,0.1) to [bend right] (1.2,0.1);
\draw (0,0) ellipse (4 and 2);
\draw [thick,blue] (0,0) ellipse (2.5 and 1.3);

\draw (8,0) to [bend left] (10,0);
\draw (7.8,0.1) to [bend right] (10.2,0.1);
\draw (9,0) ellipse (4 and 2);
\draw [thick,blue] (9,0) ellipse (2.5 and 1.3);
\draw [thick,red,bend right=30] (9,-0.2) to (9,-2);
\draw [thick,red,dashed,bend left=20] (9,-0.2) to (9,-2);

\draw (17,0) to [bend left] (19,0);
\draw (16.8,0.1) to [bend right] (19.2,0.1);
\draw (18,0) ellipse (4 and 2);
\draw [thick,red,bend right=30] (18,-0.2) to (18,-2);
\draw [thick,red,dashed,bend left=20] (18,-0.2) to (18,-2);

\node at (0,-3) {$T_1$};
\node at (9,-3) {$T_2$};
\node at (18,-3) {$T_3$};

\end{tikzpicture}
\end{center}
\caption{Tori Complex}
\label{}
\end{figure}

One obtains an uncountably infinite family of CAT$(0)$ spaces by changing the geometry of $X(A)$ in such a way that it is no longer cubical yet it is still CAT(0) and the group $A$ still acts on the new spaces geometrically.   Specifically, one can change the angle $\alpha$ to be any real number $0 < \alpha\leq \frac{\pi}{2}$.  This particular change of geometry was studied in the original paper \cite{ckpaper} where they proved that changing the angle from $\frac{\pi}{2}$ to any other value changes the homeomorphism type of $\partial_{\infty} X(A)$.  This was further investigated by J. Wilson in \cite{wilson} where it was shown that any two different angles give different visual boundaries.

The \textit{geometric data} associated with a Croke-Kleiner space consists of three intersecting angles and four translation distances. The three intersection angles are that of the intersecting angle of the three pairs of $\pi_1$-generating, simple closed curves on the three tori, which we denote $\theta_1, \theta_2,\theta_3$. The four lengths are the translation distance of $a, b, c, d$.  

In this paper we fix $\theta_1 = \theta_2 = \theta_3 = \pi/2 $ unless otherwise specified. Since the $\theta_i$ are right angles, we can use $|a|, |b|, |c|, |d|$ to denote the translation distance, or by abuse of notation $a, b, c, d$. It can be easily checked that length variation is a quasi-isometry but not an isometry on the space. This motivates the following definition. 

\begin{defn} A CAT$(0)$ $X$ space obtained by a length change on the cubical complex $X(A)$ will be referred to as a \textit{Croke-Kleiner space}.
\end{defn}

We now describe the structure of any Croke-Kleiner space $X$.  

\begin{defn}
A \textit{barrier} is a maximal connected component of the universal cover of $T_2$ in $X$. 
A \textit{block} is a maximal, connected component of the universal cover of $Y_i$ in $X$, which we denoted $X_i$. 
\end{defn}

Each block, as well as each barrier is a closed, connected and locally convex subset of $X(A)$. Let $\mathscr{B}$ and $\mathscr{W}$ denote respectively the collection of all blocks and barriers. We will prove later that $\mathscr{B}$ and $\mathscr{W}$ are countably infinite sets.\\

Let $\T_4$ be the regular 4-valence, infinite tree that is isomorphic as a graph to the Cayley graph of $F_2 $ with two generators. A block is isometric to the metric product of a $\T_4$ with appropriate edge lengths with the real line $\mathbb{R}$. The intersection of two blocks can be either an empty set or a barrier. Two blocks are \textit{adjacent} if and only if their intersection is a barrier.

\begin{figure}[h]
\begin{center}
\begin{tikzpicture}[scale=0.85]

\begin{scope}[z={(-7mm,-4mm)}]

\draw (-2,2,0) -- (2,2,0) -- (2,-2,0) -- (-2,-2,0) -- (-2,2,0);
\draw (-2,0,2) -- (2,0,2) -- (2,0,-2) -- (-2,0,-2) -- (-2,0,2);
\draw (-2,0.5,-1) -- (2,0.5,-1) -- (2,-0.5,-1) -- (-2,-0.5,-1) -- (-2,0.5,-1);
\draw (-2,0.5,1) -- (2,0.5,1) -- (2,-0.5,1) -- (-2,-0.5,1) -- (-2,0.5,1);
\draw (-2,1,0.5) -- (2,1,0.5) -- (2,1,-0.5) -- (-2,1,-0.5) -- (-2,1,0.5);
\draw (-2,-1,0.5) -- (2,-1,0.5) -- (2,-1,-0.5) -- (-2,-1,-0.5) -- (-2,-1,0.5);

\draw [dashed] (-2,0,0) -- (2,0,0);
\draw [dashed] (-2,1,0) -- (2,1,0);
\draw [dashed] (-2,-1,0) -- (2,-1,0);
\draw [dashed] (-2,0,1) -- (2,0,1);
\draw [dashed] (-2,0,-1) -- (2,0,-1);

\end{scope}

\end{tikzpicture}
\end{center}
\caption{A Block}
\label{}
\end{figure}

The Croke-Kleiner space can be projected onto an infinite, locally infinite tree via the following: let each block be projected onto a vertex; two vertices are adjacent if and only if two blocks are adjacent. If a group acts on the Croke-Kleiner space, it necessarily preserves blocks and block adjacencies. Thus the group acts on the tree.

\subsection{The boundaries of $X$}
Let $X$ be a Croke-Kleiner space and let $\ibd X$ and $\tbd X$ denote respectively the visual boundary and the Tits boundary of the space $X$. $\ibd B$ is homeomorphic to the suspension of a Cantor set and $\tbd B$ is the suspension of an uncountable discrete set with each suspension arc having length $\pi/2$.

\begin{defn}
The two suspension points of $\ibd B$, called \textit{poles} of the block, are the equivalence classes of geodesics correspond with the pair $\{b^{n} (x)$, $b^{-n}(x)\}$, or the pair $\{c^{n} (x)$, $c^{-n}(x)\}$, as $n \rightarrow \infty$.
\end{defn}

\begin{defn}
A \textit{longitude} of the block is an arc in $\ibd B$ joining the two poles. It can also be thought of as the suspension of a point in the Cantor set.
\end{defn}

We say that a geodesic ray $\xi$ \textit{enters} a special flat $V$ if there are values $r < R$ in the domain
of $\xi$ such that $\xi([r, R]) \subset V$. $\xi$ enters a block if it enters a non-barrier special flat 
of the block. 

\begin{defn}
An \textit{itinerary} of a geodesic ray, $It(\xi)$, is the sequence of blocks that the 
geodesic ray enters in order. An itinerary can be either finite or infinite. 
\end{defn}

We say that $\xi \in \ibd X$ is a \textit{vertex} if there is a neighborhood $U$ 
of $\xi$ such that the path component of $\xi$ in $U$ is homeomorphic to the cone over a Cantor set, 
with $\xi$ corresponding to the vertex of the cone. \\

A path $c: [0,1] \rightarrow \ibd X$ is \textit{safe} if $c(t)$ is a vertex for only finitely
 many $t \in [0, 1]$. Since the property of being join-able by a safe path is an equivalence relation on 
pairs of points, and since $\ibd B_1 \cup \ibd B_2$ is safe path connected when $B_1$ is 
adjacent to $B_2$, it follows that $\bigcup_{B \in \mathscr{B}} \ibd B$ is safe path connected. 
It is shown in \cite{ckpaper} that $\bigcup_{B \in \mathscr{B}} \ibd B$ is
a \textit{safe-path component} of $\ibd X$.

\subsection{The Other Nerve}
In the original paper \cite{ckpaper}, the term "nerve" refers to the tree associated with the block decomposition described in the previous section. There is another Bass-Serre tree of the space that corresponds to the following amalgamated product decomposition:

\begin{figure}[h]
\begin{center}
\begin{tikzpicture}[scale=0.6]

\node (v11) at (0,0) {$T_2$};

\node (v21) at (3,4) {$T_1$};
\node (v22) at (3,3) {$T_1$};
\node (v23) at (3,1) {$T_1$};
\node (v24) at (3,-1) {$T_3$};
\node (v25) at (3,-2) {$T_3$};
\node (v26) at (3,-4) {$T_3$};

\node (v31) at (6,5) {$T_2$};
\node (v32) at (6,4) {$T_2$};
\node (v33) at (6,2) {$T_2$};
\node (v34) at (6,0) {$T_2$};
\node (v35) at (6,-1) {$T_2$};
\node (v36) at (6,-3) {$T_2$};

\draw (v11) to (v21);
\draw (v11) to (v22);
\draw (v11) to (v23);
\draw (v11) to (v24);
\draw (v11) to (v25);
\draw (v11) to (v26);

\draw (v21) to (v31);
\draw (v21) to (v32);
\draw (v21) to (v33);
\draw (v24) to (v34);
\draw (v24) to (v35);
\draw (v24) to (v36);

\node at (3,2) {$\vdots$};
\node at (3,-3) {$\vdots$};
\node at (6,3) {$\vdots$};
\node at (6,-2) {$\vdots$};

\end{tikzpicture}
\end{center}
\caption{The Nerve}
\label{}
\end{figure}
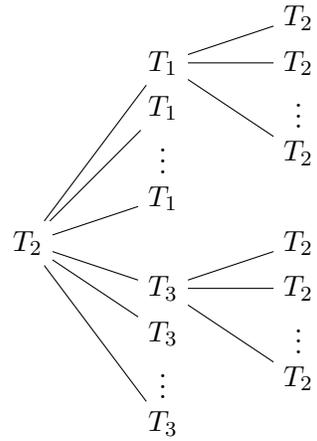

This Bass-Serre tree, which we call the "nerve", records the intersection of special flats and gluing lines rather than blocks and barriers. The \textit{nerve} is a simplicial tree that records the intersections of special flats that are lifts of the tori. Each vertex in the \textit{nerve} represent a special flat that is a lift of $T_1, T_2$ or $T_3$, two vertices are adjacent if and only if two special flats intersect along a line. The nerve is also a regular, infinite and locally infinite tree. Each vertex is adjacent to countably many other vertices. The vertices representing lifts of $T_1$ (and respectively $T_3$) are adjacent to countably infinite copies of vertices representing lifts of $T_2$, while the lifts of $T_2$ are adjacent to both countably many $T_1$ and countably many $T_3$.

\section{Proof of Theorem 1.1}

In this section we restate and proof the main result of this paper, Theorem 1.1.  

Let $W$ be the following right-angled Coxeter group:

$$
W =\{ s_1, s_2,...s_n | s_i^2 \text{ for all }i, [s_i,s_j] \text{ for some pairs } i, j  \}
$$

\noindent The main theorem of this paper is the following:

\begin{thm} 
Suppose $W$ acts geometrically on a Croke-Kleiner space and preserve special flats, then the intersection angle on the 
middle torus must be $\frac{\pi}{2}$.
\end{thm}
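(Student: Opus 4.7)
The plan is to focus on the stabilizer in $W$ of a single middle-torus flat and use the right-angled Coxeter relations to extract a reflection in that stabilizer whose axis is a grid line; preservation of the transverse family of grid lines will then force $\theta_2=\pi/2$. First, since $W$ preserves the special flats, it acts on the nerve tree $N$. Because $W$ is generated by involutions and $N$ is a tree, the generalized reflection condition holds, so O'Brien's theorem yields a strict fundamental domain that is a finite subtree $K\subset N$ together with a graph-of-groups decomposition of $W$ whose vertex and edge groups are setwise stabilizers of the corresponding flats and gluing lines. Fix a vertex $v\in K$ representing a $T_2$-flat $F\cong\R^2$; the stabilizer $W_v$ acts cocompactly on $F$ by isometries, and since $W$ permutes the adjacent $T_1$- and $T_3$-flats, $W_v$ preserves both the family of $b$-line lifts and the family of $c$-line lifts on $F$, which meet at angle $\theta_2$.

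The crucial step is to extract a line reflection $r\in W_v$ acting on $F$ with axis either along or perpendicular to a grid line. Each edge of $K$ incident to $v$ corresponds to a gluing line $\ell\subset F$; its stabilizer acts cocompactly on $\ell\cong\R$ and, being generated by involutions, is infinite dihedral. Such an edge stabilizer contributes involutions to $W_v$ that, viewed as isometries of $F$, fall into three types: reflection in $\ell$, reflection in the perpendicular to $\ell$ at a point of $\ell$, or half-turn centered on $\ell$. A direct case analysis in $\mathrm{Isom}(\R^2)$ shows that two distinct commuting involutions cannot both be half-turns, so any commuting pair of generating involutions of $W_v$ must contain a line reflection. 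Moreover, $W_v$ cannot be a free product of copies of $\Z/2$, since such a group is virtually free and cannot act cocompactly on $\R^2$; hence the right-angled structure inherited from $W$ must supply at least one commuting pair, producing a line reflection $r\in W_v$ whose axis is necessarily a symmetry axis of the union of $b$- and $c$-lines on $F$.

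Suppose the axis of $r$ is a $b$-line $\ell$. In coordinates with $\ell=\{y=0\}$ we have $r(x,y)=(x,-y)$. Then $r$ sends the $c$-family of parallel lines of slope $\tan\theta_2$ to the family of slope $-\tan\theta_2$; for this image to coincide with the $c$-family as an unoriented line family, we need $\theta_2\equiv-\theta_2\pmod\pi$, which together with $0<\theta_2\le\pi/2$ forces $\theta_2=\pi/2$. The cases where the axis of $r$ is perpendicular to a $b$-line, or where one works with a $c$-line instead, are handled by the same computation.

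The main obstacle I anticipate lies in ruling out the remaining possibility that the axis of $r$ is a diagonal bisector of the rhombic grid (which would swap the $b$- and $c$-families rather than preserving each individually). In the case $|a|=|d|$ with $\theta_1=\theta_3$, elements of $W$ may legitimately swap $T_1$- and $T_3$-flats via such a diagonal reflection on $F$, so one must either pass to the finite-index subgroup of $W$ preserving flat types, or extract from the graph-of-groups decomposition another generating involution of $W_v$ whose action on $F$ preserves the $b$- and $c$-families individually, in order to obtain a non-diagonal line reflection and conclude $\theta_2=\pi/2$.
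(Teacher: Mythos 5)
Your setup mirrors the paper's: pass to the nerve tree, invoke O'Brien's strict fundamental domain, identify the stabilizer $W_v$ of a $T_2$-flat $F$ as acting cocompactly by isometries on $\mathbb{E}^2$, and classify commuting order-two isometries of the plane to extract line reflections. But there is a genuine gap, and it sits exactly where the paper's proof does its real work. If $\theta_2\neq\pi/2$, the only mirror symmetries of the rhombic union of $b$- and $c$-lines are reflections in the diagonals of the rhombi, which swap the two families. So your ``main case'' (axis along or perpendicular to a grid line) is vacuous unless $\theta_2=\pi/2$ already holds: assuming $\theta_2\neq\pi/2$, every line reflection in $W_v$ is forced into the diagonal configuration, and your argument produces no contradiction from that. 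You flag this yourself as ``the main obstacle,'' but neither of your two suggested escapes is carried out, and the first (passing to a finite-index type-preserving subgroup) is problematic because your extraction of a line reflection depends on $W_v$ inheriting the right-angled Coxeter structure, which a finite-index subgroup need not retain.

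The paper closes this case by playing two flat stabilizers against each other along a shared gluing line. Its Key Lemma is stronger than your extraction step: it shows the action of the stabilizer restricted to a special flat is exactly $D_\infty\times D_\infty$, realized as two perpendicular families of parallel reflection axes --- not merely that one line reflection exists. Then, working in a $T_1$-flat, a generator whose axis crosses a gluing line $l_{2,1}$ perpendicularly reflects $l_{2,1}$ onto itself; that same generator stabilizes the adjacent $T_2$-flat and must act there as a reflection whose axis meets $l_{2,1}$. In the diagonal configuration no such axis is available among the two diagonal families, so one gets a third, pairwise non-parallel axis in a single flat, contradicting the $D_\infty\times D_\infty$ classification. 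To repair your proof you would need both the full strength of that classification (your ``at least one line reflection'' is too weak to make three non-parallel axes a contradiction) and the cross-flat compatibility argument; as written, the proposal reduces the theorem to its hardest case and stops there. A secondary, smaller gap: your claim that $W_v$ with no commuting generating involutions would be a free product of copies of $\Z/2$ implicitly uses that $W_v$ is a special (parabolic) subgroup of $W$, which you have not established (the paper proves this separately via its Lemma 4.5 and Proposition 4.6).
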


Recall a \textit{special flat} is a flat that is a lift of $T_i$. We start with lemmas about the action of the generators and the stabilizers of each special flat.

\begin{lem} \label{withoutinversion}
 For the given complex, each generator of the right-angled Coxeter group acts on the nerve tree without inversion. 
\end{lem}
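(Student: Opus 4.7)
The plan is to exploit the bipartite structure of the nerve. Because distinct lifts of the same torus $T_i$ are disjoint components of the preimage of $T_i$ in $X$, two lifts of the same $T_i$ never intersect; and because in $Y$ the torus $T_1$ is glued only to $T_2$ (along $b$) and $T_3$ is glued only to $T_2$ (along $c$), a $T_1$-lift and a $T_3$-lift cannot share a line either. Every edge of the nerve therefore has one endpoint that is a $T_2$-lift and the other endpoint that is a $T_1$- or $T_3$-lift. An inversion of an edge by $s_i$ would accordingly force $s_i$ to exchange a $T_2$-lift with a $T_1$- or $T_3$-lift, and I plan to rule this out by exhibiting an intrinsic invariant of a special flat, preserved by any isometry of $X$ that permutes special flats, which distinguishes the two classes.

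The invariant is the number of directions occupied by the intersection lines with adjacent nerve vertices. On a $T_2$-lift $F$, the lines $F \cap F'$ with $F'$ adjacent to $F$ are precisely the preimages of the curves $b$ and $c$ under the covering $F \to T_2$; in $F \cong \mathbb{E}^2$ they form two families of parallel lines meeting at angle $\theta_2 > 0$, so they occupy two distinct directions. On a $T_1$-lift, by contrast, every adjacent nerve vertex is a $T_2$-lift intersecting it along a lift of the single curve $b_1 = b$, so all these intersection lines are parallel, giving a single direction. The $T_3$ case is analogous, with direction $c$. Since each generator $s_i$ is an isometry of $X$ that, by the standing hypothesis of the main theorem, permutes special flats, it permutes nerve vertices and carries intersection lines to intersection lines; hence it cannot map a flat with two intersection-line directions onto a flat with one.

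Combining the two observations, if $s_i$ were to invert any edge of the nerve it would be forced to exchange a $T_2$-lift with a flat of "one-direction" type, which the invariant forbids; therefore $s_i$ acts without inversion. The main obstacle in executing this plan is really just the first sentence of the middle paragraph — one must be certain that every intersection line between two adjacent special flats is a lift of $b$ or of $c$ and nothing else, which is a direct consequence of the gluing description $Y = T_1 \cup T_2 \cup T_3$ and the fact that lifts of $T_i$ separate along these curves in $X$. Once that local picture is recorded, the directional count and the resulting contradiction are immediate.
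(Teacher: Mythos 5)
Your proposal is correct and is essentially the paper's own argument: both proofs reduce an inversion to the impossibility of an isometry exchanging a $T_2$-lift with a $T_1$- or $T_3$-lift, and both detect the difference via the gluing lines (you count two directions of intersection lines versus one inside the flat; the paper counts the same data at infinity, as a boundary circle with four poles versus two). The only cosmetic difference is that you work intrinsically in the flat while the paper phrases the invariant on the visual boundary, so no further comment is needed.
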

\begin{proof}
In the nerve, we have vertices that are labeled by the torus of which they denote the universal cover, by abuse of notation, the edges in the nerve are also labeled by either of the following pairs:
$$
\{ T_2, T_1\}, \{T_2, T_3\}
$$
Isometry in the space induces a homeomorphism on the boundary. The boundary of the special flats $\widetilde{T_2}$ is a circle
with four poles, the boundary of the special flats $\widetilde{T_1}$ and $\widetilde{T_3}$ are circles with two poles. Since an isometry of the space induces a homeomorphism on the boundary that takes circles to circles and poles to poles, an isometry cannot invert edges. 
\end{proof}

In the following definitions we lay out O'Brien's construction \cite{mob} of a strict fundamental domain:

\begin{defn}
For a group element $w \in W$, let $X^w$ be the fixed point set of $w$. Let $\mathscr{T}_w (Y)$ be the set of components of $X/ X^w$. If $w= s_i$ is a generator we simply write $\mathscr{T}_i$.

Let
$$
\mathscr{T} : = \bigcup_{i \in I} \mathscr{T}_i
$$ 
\end{defn}

Let $T$ denote a connected  component of $\mathscr{T}_i$, and let 

$$
\tilde{\mathscr{T}}_i := \{ T \cup X^{s_i} | T \in \mathscr{T_i}\}
$$

The elements of $\tilde{\mathscr{T}}_i$ we denote as $\tilde{T} : = T \cup X^{s_i}$.\\

We know that if $s_i$ and $s_j$ does not commute, then $X^{s_i} \cap X^{x_i} = \phi$. Moreover, since $X^{s_j}$ is connected, there exists a unique component in $\mathscr{T}_i$ that contains $X_{s_j}$, which we denote $T^{y}_{ij}$, a \textit{yes}-component, and a \textit{no}-component 
is defined as $T^{n}_{ij} := s_i . T^{y}_{ij}$.

Let 

$$
\tilde{T^{y}} := \bigcap_{ i \in I } \bigcup _{j \in \underline{kl(i)}} (T^{y}_{ij} \cup X^{s_i})
$$

In the context of this paper, suppose a fixed-point set for a generator contains two or more vertices, then it fixes point-wise a unique path in the finite subtree(that is the strict fundamental domain) between them. Since $s$ is order two, the path is forced to extend infinitely which contradicts the assumption that the action is cocompact and therefore the strict fundamental domain is compact. So we have the following corollary: 

\begin{cor} For each generator $s_i$, let $v_{s_i}$ be the label of the fixed point of $s_i$ in the Bass-Serre tree. Then there is exactly a strict fundamental domain where for each $s_i$, $v_{s_i}$ appears as a label for some vertex. 
\end{cor}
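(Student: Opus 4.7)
The plan is to establish that each generator $s_i$ has a unique fixed vertex in the nerve, and then to exhibit a strict fundamental domain containing all such vertices using O'Brien's construction recalled just above the corollary.

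First, I would prove existence of a fixed vertex. Since $s_i$ has order $2$ and, by Lemma~\ref{withoutinversion}, acts on the nerve without inversion, the standard Bass--Serre-theoretic fact that a finite-order isometry of a simplicial tree acting without inversion has at least one fixed vertex yields a non-empty fixed subtree for $s_i$.

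For uniqueness I would argue by contradiction, following the sketch immediately preceding the corollary. Suppose $s_i$ fixes two distinct vertices $u, u'$. Fixed sets of tree isometries are convex subtrees, so $s_i$ fixes pointwise the geodesic path $P$ from $u$ to $u'$. I would then analyze the situation at an endpoint $v$ of the maximal fixed subtree: the restriction of $s_i$ to the special flat $F_v$ (labeled by $v$) is a Euclidean isometry of order dividing $2$, hence the identity, a half-turn, or a reflection. Each adjacent edge at $v$ corresponds to a gluing line in $F_v$ along which $F_v$ meets a neighboring special flat, and $s_i$ must permute these gluing lines. A case analysis combined with the labeled adjacency pattern of the nerve (vertices of label $T_2$ adjacent only to $T_1$ and $T_3$ vertices, and vice-versa) forces at least one non-$P$ edge at $v$ to be fixed, so the fixed subtree extends past $v$. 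Iterating produces an infinite fixed subtree, contradicting the cited theorem of O'Brien that the strict fundamental domain is a finite subtree. This yields the unique fixed vertex $v_{s_i}$.

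For the final clause, I would invoke the strict-fundamental-domain construction $\tilde{T^{y}} := \bigcap_{i \in I} \bigcup_{j \in \underline{kl(i)}} (T^{y}_{ij} \cup X^{s_i})$ recorded in the definitions just above the corollary. By construction $\tilde{T^{y}}$ is a strict fundamental domain and contains $X^{s_i}$ for every generator $s_i$. Projecting to the nerve, the image of $X^{s_i}$ is the singleton $\{v_{s_i}\}$, so $\tilde{T^{y}}$ realizes each label $v_{s_i}$ as a vertex.

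The main obstacle is the uniqueness step. The assertion that the fixed path ``extends infinitely'' is easy to state but must be unpacked using both the order-$2$ hypothesis and the CAT(0) Euclidean structure on each special flat. The delicate scenario is when $s_i|_{F_v}$ is a reflection whose axis is transverse to every gluing line in $F_v$, in which case no adjacent edge would be fixed and the argument would break; ruling this out is where the right-angled arrangement of gluing lines, the labeling of the nerve by $T_1,T_2,T_3$, and the explicit adjacency data must be used.
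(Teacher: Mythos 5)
Your proposal follows essentially the same route as the paper: the paper's entire justification is the two-sentence paragraph immediately preceding the corollary, which asserts that a generator fixing two or more vertices would fix a path that, because $s_i$ has order two, ``is forced to extend infinitely,'' contradicting finiteness of the strict fundamental domain; the final clause is then read off from O'Brien's construction of $\tilde{T^{y}}$, exactly as you do. Your write-up is in fact more careful than the paper's, since you make the existence step explicit (via Lemma~\ref{withoutinversion} and the fixed-vertex theorem for finite-order tree automorphisms) and you correctly identify that the ``extends infinitely'' assertion requires a case analysis of $s_i$ restricted to the flat $F_v$ at an endpoint of the fixed subtree. That case analysis is the one genuine gap here, and it is a gap in the paper as well, not only in your sketch: for instance, if $s_i$ acts on $F_u$ and on the adjacent $T_2$-flat as a half-turn about a point of the shared gluing line positioned symmetrically between the transverse gluing lines, then no further edge of the nerve need be fixed, and neither the paper nor your argument rules this configuration out at this stage (the later Key Lemma, which shows stabilizers act by reflections across lines, cannot be invoked without circularity). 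So: same approach, honestly flagged, with the one unproved step being exactly the one the paper also leaves unproved.
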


\begin{lem}
Suppose a right-angled Coxeter group $W$ acts geometrically on the Croke-Kleiner space and takes special flats to special flats. If a group element $w$ fixes a special flat $\widetilde{T_i}$ set-wise, suppose $w=s_{k}s_{k-1}...s_2s_1$, then each $s_i$ 
fixes $\widetilde{T_i}$ set-wise.
\end{lem}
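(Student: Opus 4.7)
The plan is to translate the claim from the action on the Croke-Kleiner space into a statement about the action of $W$ on the nerve tree, and then invoke Bass-Serre theory via O'Brien's framework. Each special flat in $X$ corresponds to a unique vertex of the nerve. Because $W$ preserves the family of special flats and acts on the nerve without inversion (Lemma 4.2), the hypothesis that $w$ preserves $\widetilde{T_i}$ set-wise translates into the condition that $w$ fixes the corresponding vertex $v$ of the nerve.

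By the Corollary immediately preceding this lemma, there is a finite subtree $K$ of the nerve that is a strict fundamental domain for the $W$-action, and each generator $s_j$ has a unique fixed vertex $v_{s_j}$, which lies in $K$. Since each $W$-orbit meets $K$ exactly once, and each generator's fixed vertex already sits inside $K$, after replacing $\widetilde{T_i}$ by a suitable $W$-translate (and conjugating $w$ accordingly) we may assume $v \in K$.

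The core step is to identify the stabilizer $\mathrm{Stab}_W(v)$ with the subgroup generated by exactly those generators whose fixed vertex is $v$. One direction is immediate. For the reverse inclusion, I would apply the graph-of-groups decomposition that a strict fundamental domain on a tree produces (O'Brien's Bass-Serre-type result): the vertex group at $v \in K$ is $\mathrm{Stab}_W(v)$, and in the right-angled Coxeter setting this vertex group is generated by the generators that fix $v$. An alternative hands-on argument tracks the walk $v,\, s_1 v,\, s_2 s_1 v,\, \ldots,\, w v = v$ in the nerve: the walk is closed in a tree, so it must backtrack somewhere, and backtracking together with the involution relations $s_j^2 = 1$ (and, where applicable, the commutation relations) lets one inductively cancel in pairs the generators that move $v$, producing a decomposition of $w$ using only generators that fix $v$.

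The main obstacle is this identification of the vertex stabilizer with the subgroup generated by its fixing generators; once it is in place, the lemma is immediate, since $w = s_k s_{k-1} \cdots s_1$ with each $s_j$ fixing $v$ is the same as each $s_j$ fixing $\widetilde{T_i}$ set-wise. A secondary subtlety is the conjugation step required to arrange $v \in K$, because conjugating a word in generators by $g \in W$ does not in general return a word in generators; one must therefore either pass through the fundamental-domain representative carefully or observe that in the applications to Theorem 1.1 the flat in question can be taken to correspond to a vertex already in $K$.
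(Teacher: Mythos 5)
Your setup---pass to the nerve tree, use that the action is without inversion and that a finite subtree is a strict fundamental domain---matches the paper's. But the step you yourself flag as ``the main obstacle,'' namely that $\mathrm{Stab}_W(v)$ is generated by exactly the generators fixing $v$, is not something you can safely import here: in this paper that identification is the content of Proposition 4.6, whose proof \emph{cites this lemma}. Taking the vertex-group identification as given therefore makes your argument circular relative to the paper's logical structure, and you do not supply an independent derivation of it (the appeal to ``O'Brien's Bass-Serre-type result'' is a gesture, not a proof). The paper instead proves the lemma directly: it assumes some letter, say $s_1$, does not fix the flat, takes the smallest $j$ with $s_j\cdots s_1$ fixing it, and derives a contradiction with strictness of the fundamental domain by showing the subword $s_{j-1}\cdots s_2$ would carry the path from $v_0$ to $v_{s_j}$ inside the fundamental domain to the distinct path from $v_0$ to $v_{s_1}$, i.e.\ a nontrivial element would identify two points of a strict fundamental domain.

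There is a second gap even granting the stabilizer identification: knowing $w\in W_J$ for $J=\{s: s\cdot v=v\}$ does not make the conclusion ``immediate,'' because the lemma asserts that every letter of the \emph{given} expression $s_k\cdots s_1$ fixes $v$. For that you need the expression to be reduced (the statement is false for $w=s s$ with $s\notin J$) together with the standard Coxeter-group fact that every reduced expression of an element of a standard parabolic $W_J$ uses only letters of $J$; neither is stated. Your fallback ``backtracking walk'' argument does not close this either: backtracking at step $j$ in the orbit walk $v, s_1v, s_2s_1v,\dots$ shows that a conjugate of $s_{j+1}s_j$ (or of a single letter) fixes the intermediate vertex $s_{j-1}\cdots s_1 v$, not that $s_j$ itself fixes $v$. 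To repair the proof along your lines you would need to either prove the vertex-group identification independently of Proposition 4.6, or run a direct minimal-counterexample argument on the word as the paper does.
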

\begin{proof}

Without loss of generality, suppose $s_1$ does not fix the special flat $T_i$, otherwise let $w = s_k s_{k-1}...s_2$. Let $j$ be the smallest number such that the sub-word $s_js_{j-1}...s_2s_1$ fixes the $T_i$. Consider generators $s_1$ and $s_j$. In $\T_0$,  $s_j$ and $s_1$ each label a vertex, $v_{s_j}$ and $v_{s_1}$. $\T_0$ also contains a lift of $T_i$, label it $v_0$. Since $\T_0$ is a tree, there are unique paths $(v_0, v_{s_j})$ and $(v_0, v_{s_1})$. The word $s_{j-1}s_{j-2}...s_2$ takes the edges $(v_0, v_{s_j})$ to the edges $(v_0, v_{s_1})$. This contradicts the assumption that $T_2$ is a strict fundamental domain. Therefore, each $s_i$ fixes $\widetilde{T_i}$ set-wise.
\end{proof}

Now we can analyze the stabilizer subgroups of each $T_i$.

\begin{prop}
 Given the universal cover of $T_i$, denoted $\widetilde {T_i}$ 
consider the stabilizer subgroup $Stab(\widetilde{T_i})$, then 
$Stab(\widetilde{T_i})$ is generated by a (conjugate) of a subset of the generating set $\{s_1, s_2,...s_n \}$, respectively.
\end{prop}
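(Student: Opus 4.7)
My plan is to reduce to the case where $\widetilde{T_i}$ is a special flat whose associated nerve-vertex lies in the strict fundamental subtree from Corollary 4.2, and then to apply Lemma 4.3 letter-by-letter to any word representing an element of the stabilizer. First, using Corollary 4.2, fix such a strict fundamental subtree $K$ of the nerve on which the fixed vertex $v_{s_j}$ of each generator $s_j$ appears as a label. Pick a lift $\widetilde{T_i}^{0}$ of $T_i$ whose associated nerve-vertex lies in $K$. Since the $W$-orbit of any nerve-vertex meets $K$, every lift $\widetilde{T_i}$ of $T_i$ has the form $g \cdot \widetilde{T_i}^{0}$ for some $g \in W$, and so
$$
\mathrm{Stab}(\widetilde{T_i}) \;=\; g\,\mathrm{Stab}(\widetilde{T_i}^{0})\,g^{-1}.
$$
It therefore suffices to prove the conclusion for the fundamental-domain lift $\widetilde{T_i}^{0}$, the general case following by conjugation.

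Next I would show that $\mathrm{Stab}(\widetilde{T_i}^{0})$ is generated by a subset of $\{s_1,\ldots,s_n\}$. Let $w \in \mathrm{Stab}(\widetilde{T_i}^{0})$ be arbitrary and write $w = s_{j_k}s_{j_{k-1}}\cdots s_{j_1}$ as a word in the generators. Because $w$ fixes $\widetilde{T_i}^{0}$ set-wise, Lemma 4.3 forces every letter $s_{j_\ell}$ to fix $\widetilde{T_i}^{0}$ set-wise as well. Hence $w$ lies in the subgroup generated by
$$
S_i \;:=\; \{\, s \in \{s_1,\ldots,s_n\} : s \cdot \widetilde{T_i}^{0} = \widetilde{T_i}^{0} \,\},
$$
and since $w$ was arbitrary we obtain $\mathrm{Stab}(\widetilde{T_i}^{0}) = \langle S_i \rangle$ with $S_i \subseteq \{s_1,\ldots,s_n\}$, which combined with the conjugation step gives the proposition.

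The main obstacle I expect is the conjugation step: one must guarantee that every lift of $T_i$ is in the $W$-orbit of a lift whose nerve-vertex lies in $K$. This is exactly what a strict fundamental domain delivers, but it forces us to lean on the full force of Corollary 4.2 together with Lemma \ref{withoutinversion}, which ensures that $W$ acts on the nerve without inversion so that vertex-stabilizers (rather than edge-stabilizers with swap) are the only subgroups in play. A secondary concern is that Lemma 4.3 must apply to an arbitrary expression for $w$, not merely a reduced one; happily, its conclusion is about each individual letter of whatever word we feed it, so no canonical form is required and the argument goes through for any presentation of $w$.
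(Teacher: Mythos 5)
Your proposal is correct and follows essentially the same route as the paper: both rest on the existence of the strict fundamental subtree of the nerve (via O'Brien), the no-inversion lemma to ensure generators fix vertices rather than edge-midpoints, and the letter-by-letter lemma to conclude that any word stabilizing a special flat is a product of generators each stabilizing it, with conjugation handling flats outside the fundamental domain. The only difference is that you make the conjugation reduction explicit (and should pick the representative in the fundamental subtree from the correct orbit when several $T_i$-labelled vertices occur there), whereas the paper leaves it implicit in the word ``(conjugate)'' of the statement.
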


\begin{proof} Each generator acts simplicially on the nerve tree of special flats. Furthermore, let every edge has length 1, then each group element acts isometrically on the tree. Each generator is of order two. Therefore the fixed point set of each generator acting on the nerve tree is either an induced subgraph or the midpoint of an edge. Lemma \ref{withoutinversion} rules out the latter case. By Corollary 4.4,  there exists a minimal finite tree that is the strict fundamental domain of $W$ on the nerve,
which we denote by $\T$. This tree is the fundamental domain of the group acting on this tree, therefore generators of $W$ does not take points of $\T$ to points of $\T$. By Lemma 4.5, each group element that stabilizes a vertex of $\T_0$ is generated by a subset of generators that stabilizes the vertex. Thus $Stab(\widetilde{T_i})$ are \textbf{special subgroups}, i.e. they are generated by a subset of generators.

\end{proof}

We claim the group $Stab(\widetilde{T_i})$acts on $\widetilde{T_i}$isometrically and
cocompactly. 

\begin{prop}
Stab$(\widetilde{T_i})$acts cocompactly and by isometries on the special flat.   
\end{prop}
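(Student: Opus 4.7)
The plan is to exploit a compact fundamental domain for $W\curvearrowright X$ together with local finiteness of the special flats, and from these to build a compact $K_0\subset\widetilde{T_i}$ whose $\text{Stab}(\widetilde{T_i})$-orbit covers the whole flat.

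That $\text{Stab}(\widetilde{T_i})$ acts by isometries is immediate: $\text{Stab}(\widetilde{T_i})\subseteq W\subseteq\text{Isom}(X)$, and by definition every element preserves $\widetilde{T_i}$ setwise, so the restriction yields genuine self-isometries of the flat.

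For cocompactness, I would fix a compact fundamental domain $K\subset X$ for $W\curvearrowright X$ (available because the action is geometric) and enumerate the $W$-translates of $\widetilde{T_i}$ meeting $K$ as $F_1=\widetilde{T_i}, F_2,\dots,F_m$, writing $F_j=w_j\widetilde{T_i}$ with $w_1=e$. Define
$$
K_0 \;=\; \bigcup_{j=1}^m \bigl(w_j^{-1}K\bigr)\cap\widetilde{T_i},
$$
a finite union of closed subsets of compact sets, hence compact. To verify $\text{Stab}(\widetilde{T_i})\cdot K_0=\widetilde{T_i}$, take $y\in\widetilde{T_i}$ and pick $w\in W$ with $wy\in K$; then $w\widetilde{T_i}$ meets $K$, so $w\widetilde{T_i}=F_j$ for some $j$, forcing $s:=w_j^{-1}w\in\text{Stab}(\widetilde{T_i})$. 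Consequently $s\cdot y=w_j^{-1}(wy)\in(w_j^{-1}K)\cap\widetilde{T_i}\subseteq K_0$, so $y=s^{-1}\cdot(s\cdot y)\in\text{Stab}(\widetilde{T_i})\cdot K_0$.

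The main obstacle is the intermediate finiteness claim: only finitely many $W$-translates of $\widetilde{T_i}$ can meet the compact set $K$. I would prove this by combining two local-finiteness facts for Croke--Kleiner spaces. First, each block is isometric to $\mathscr{T}_4\times\R$, and the special flats inside the block that are transverse to the $\R$-factor correspond to edges of the locally finite tree $\mathscr{T}_4$, so only finitely many such flats meet any compact subset of the block. Second, any compact subset of $X$ is contained in the union of finitely many blocks, by the analogous tree argument applied to the block-adjacency tree. Together these bound the list $F_1,\dots,F_m$ and complete the argument.
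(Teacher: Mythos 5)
Your proof is correct, and it is in essence a careful, complete version of what the paper does in one line. The paper's own proof simply asserts that $K \cap \widetilde{T_i}$ is a fundamental domain for $\mathrm{Stab}(\widetilde{T_i}) \curvearrowright \widetilde{T_i}$, where $K$ is a compact fundamental domain for $W \curvearrowright X$. As stated that assertion is not justified: a point $y \in \widetilde{T_i}$ may be carried into $K$ only by elements $w$ that do not stabilize $\widetilde{T_i}$, in which case $wy$ lands in $K \cap w\widetilde{T_i}$ rather than in $K \cap \widetilde{T_i}$, so the single intersection need not have full $\mathrm{Stab}(\widetilde{T_i})$-orbit. Your construction $K_0 = \bigcup_j (w_j^{-1}K) \cap \widetilde{T_i}$, indexed by the translates of $\widetilde{T_i}$ meeting $K$, is exactly the standard repair, and your supporting finiteness claim is sound: the special flats within a block $\T_4 \times \R$ correspond to axes in the locally finite tree $\T_4$, and a compact set meets only finitely many blocks, so only finitely many translates of $\widetilde{T_i}$ can meet $K$. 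So your argument does not merely reproduce the paper's proof; it supplies the local-finiteness input and the pullback construction that the paper's version tacitly assumes. The only cosmetic caveat is that the paper uses ``fundamental domain'' loosely; for cocompactness all one needs is a compact set whose $W$-translates cover $X$, which is what your argument actually uses.
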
  

\begin{proof}
The group acts cocompactly on the the space. If $K$ is a fundamental domain for $W \curvearrowright X$, then 
$$
K \cap \widetilde{T_i}
$$
is the fundamental domain for the actions of $Stab(\widetilde{T_i})$. Therefore $Stab(\widetilde{T_i})$ acts cocompactly on the special flat it stabilizes. 
\end{proof}

Next we study a right-angled Coxeter group acting cocompactly and by isometries on a 2-dimensional Euclidean special flat. 
For a presentation of a right-angled Coxeter group

$$
W =\{ s_1, s_2,...s_n | s_i^2 \text{ for all }i, [s_i,s_j] \text{ for some pairs } i, j  \}
$$

Consider the defining graph of the group. First one can rule out the defining graphs on less than or 
equal to three vertices since they either have 0, 2, or infinitely many ends.  Indeed, the number of ends of a group is a quasi-isometry invariant and the plane has one end so a group with 0 or more than one end cannot act geometrically on the plane by the Sˇvarc-Milnor Lemma. 


Recall Gromov's Theorem \cite{thebible}:

\begin{thm}
If a finitely generated group is quasi-isometric to $\Z^n$ then it contains $\Z^n$ 
as a subgroup of finite index.
\end{thm}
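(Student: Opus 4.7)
The plan is to combine Gromov's polynomial growth theorem with the structure theory of finitely generated nilpotent groups. Since growth type is a quasi-isometry invariant and $\Z^n$ has polynomial growth of degree $n$, any finitely generated $G$ quasi-isometric to $\Z^n$ also has polynomial growth of degree $n$. Gromov's polynomial growth theorem then yields a nilpotent subgroup $N \leq G$ of finite index.

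Next I would apply the Bass--Guivarc'h formula: for a finitely generated nilpotent group, the degree of polynomial growth is $d(N) = \sum_{k \geq 1} k \cdot r_k$, where $r_k = \mathrm{rank}(\gamma_k N / \gamma_{k+1} N)$ and $\gamma_k N$ denotes the $k$-th term of the lower central series. The Hirsch length of $N$ equals $h(N) = \sum_{k \geq 1} r_k$. Because $N$ has finite index in $G$, it inherits polynomial growth of degree $n$, so $d(N) = n$. One also has $h(N) = n$: the asymptotic cone of $N$ is a connected, simply connected nilpotent Lie group whose topological dimension equals $h(N)$, and since $N$ is quasi-isometric to $\Z^n$, whose asymptotic cone is $\R^n$, this dimension is forced to be $n$. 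Subtracting $h(N)$ from $d(N)$ yields $\sum_{k \geq 2}(k-1) r_k = 0$, so $r_k = 0$ for all $k \geq 2$, and $N$ is virtually abelian of rank $n$.

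Finally, a finitely generated virtually abelian group of rank $n$ contains $\Z^n$ as a finite-index subgroup; composing with $N \leq G$ shows that $\Z^n$ sits in $G$ with finite index. The main obstacle is invoking Gromov's polynomial growth theorem, a genuinely deep result whose original proof routes through the Montgomery--Zippin--Yamabe solution to Hilbert's fifth problem via an asymptotic cone construction. Once that is granted, the remaining steps are essentially bookkeeping in the structure theory of finitely generated nilpotent groups.
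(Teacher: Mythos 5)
The paper does not actually prove this statement: it is recalled as a known theorem with a citation to the literature, so there is no in-paper argument to compare against. Your sketch is the standard proof of quasi-isometric rigidity of $\Z^n$ and is essentially correct: quasi-isometry invariance of growth, Gromov's polynomial growth theorem, and the Bass--Guivarc'h formula combined with the equality $h(N)=n$ do force the nilpotent subgroup to be virtually abelian of rank $n$. The one step worth flagging is your justification that the Hirsch length equals $n$: the claim that the asymptotic cone of a finitely generated nilpotent group is a simply connected nilpotent Lie group of topological dimension $h(N)$ is Pansu's theorem, itself a substantial input, so you are silently invoking a second deep result alongside Gromov's. An alternative at that point is to pass to a torsion-free finite-index subgroup $N'$, note that $N'$ is the fundamental group of a closed aspherical nilmanifold of dimension $h(N)$, and use the quasi-isometry invariance of (rational) cohomological dimension, or Shalom's later cohomological proof which bypasses Gromov's theorem entirely; either way the bookkeeping you describe then closes the argument.
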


\begin{lem}[Key Lemma]
 Suppose $W$ is a right-angled Coxeter group acting cocompactly and by isometries on the special flat $\mathbb{E}^2$.  
Then we claim that $W$ must be the direct product of two copies of the infinite dihedral group.  
\end{lem}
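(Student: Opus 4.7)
The plan is to combine the two theorems just cited (Svarc--Milnor and Gromov) with a structural analysis of the defining graph of $W$.

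First I would apply the Svarc--Milnor lemma to the action of $W$ on $\E^2$ (reading ``cocompactly and by isometries'' as including proper discontinuity, consistent with the paper's definition of a geometric action) to conclude that $W$ is finitely generated and quasi-isometric to $\E^2$, and hence to $\Z^2$. Gromov's theorem, quoted immediately above the lemma, then produces a finite-index copy of $\Z^2$ inside $W$, so $W$ is virtually abelian.

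Next I would read off the structure of the defining graph $\Gamma$ of $W$. The key observation is that whenever $\Gamma$ contains an induced subgraph on three vertices spanning at most one edge, the corresponding special subgroup of $W$ is either $\Z_2 * \Z_2 * \Z_2$ or $(\Z_2 \times \Z_2) * \Z_2$; in either case a short ping-pong argument produces a rank-two free subgroup, contradicting virtual abelianness of $W$. Hence every three vertices of $\Gamma$ span at least two edges, which is equivalent to saying that the complement $\overline{\Gamma}$ is simultaneously triangle-free and $P_3$-free. The only graphs satisfying both conditions are disjoint unions of isolated vertices and isolated edges, so $\Gamma$ itself is the complement of a matching: its generating set partitions into some $k \geq 0$ central vertices $c_1,\dots,c_k$ (each adjacent to every other generator) together with $r$ mutually non-adjacent pairs $\{a_i, b_i\}_{i=1}^{r}$, with all remaining pairs adjacent. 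This presentation forces
\[
W \;\cong\; (\Z_2)^k \times D_\infty^{\,r},
\]
which is virtually $\Z^r$, so Step~1 forces $r = 2$.

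Finally I would eliminate the central $(\Z_2)^k$ factor. Any element of this factor is central in $W$ and therefore commutes with the two linearly independent translations produced in Step~1; but an isometry of $\E^2$ commuting with two independent translations has trivial linear part, is therefore itself a translation, and the only involutive translation is the identity. Thus the $(\Z_2)^k$ factor lies in the kernel of the action, and (replacing $W$ by its faithful quotient if needed, which is the natural reading in context) we conclude $k = 0$ and $W \cong D_\infty \times D_\infty$. The main obstacle is Step~2: while the ping-pong constructions of $F_2$ inside the two small free products are standard, the graph-theoretic step showing that $\Gamma$ must be the complement of a matching is the content that pins down the rigid product form of $W$ and rules out, say, the affine reflection groups $\tilde A_2$, $\tilde B_2$, $\tilde G_2$ at the Coxeter-theoretic level.
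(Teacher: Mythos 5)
Your argument is correct, but it takes a genuinely different route from the paper's. The paper likewise starts from Gromov's theorem to obtain a finite-index $\Z^2$, but then invokes Moussong's hyperbolicity criterion to locate an induced chordless $4$-cycle in the defining graph, and proceeds by a geometric case analysis of the order-two isometries of $\E^2$ (reflections in lines versus $\pi$-rotations about points), determining which configurations commute and which products have infinite order; the only surviving configuration is two perpendicular pairs of parallel reflection axes, and a final appeal to a ``Finite Index Lemma'' excludes any additional generators. You instead work almost entirely at the level of the defining graph: ruling out $\Z_2 * \Z_2 * \Z_2$ and $(\Z_2\times\Z_2)*\Z_2$ as special subgroups forces the complement of $\Gamma$ to be a matching, hence $W \cong (\Z_2)^k \times D_\infty^{\,r}$, after which the quasi-isometry type pins $r=2$ and centrality kills $k$. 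Your version buys a cleaner, more standard derivation of the abstract isomorphism type (the paper's commutation analysis is somewhat informal --- for instance it omits the case of two $\pi$-rotations about the same point --- and it relies on the unexplained assertion that $W$ has at least four generators, which your classification renders unnecessary); the paper's version buys, in the same stroke, the explicit geometric picture of the action (two perpendicular pairs of parallel reflection lines) that is then used in Corollary 4.10 and in the proof of the main theorem, whereas on your route that description must still be extracted afterwards, e.g.\ via Bieberbach's theorem. Two small points to tighten: the elements of your finite-index $\Z^2$ act a priori as translations \emph{or glide reflections}, so to produce two independent translations you should pass to their squares (or cite Bieberbach); and, exactly as in the paper, eliminating the $(\Z_2)^k$ factor requires faithfulness of the action or a passage to the image of $W$ in $\mathrm{Isom}(\E^2)$.
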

\begin{proof}
We know that the group $W$ has at least four generators. Since $W$ contains $\Z^2$ as a subgroup of finite index, it is not hyperbolic. By \cite{moussong}, if $\Gamma$ is the defining graph of $W$, then in $\Gamma$ there exists induced subgraphs $A,B$ such that $\langle A \rangle$,  $\langle B \rangle$ are infinite and $A *_{join} B$ is a subgraph of $\Gamma$. In particular, there exists two infinite order elements $\gamma'_1 =s_1t_1, \gamma'_2 = s_2t_2$ such that the subgraph on the vertices $s_1, s_2, t_1, t_2$ is a join of two pairs of non-adjacent vertices. The subgraph is a chord-less 4-cycle, where $s_1$ is adjacent to $s_2$ and $t_2$, and $t_1$ is adjacent to $s_2$ and $t_2$.

The actions of $s_1, t_1, s_2, t_2$ are order-2 isometries of the special flat, which are either reflecting across a straight line $l$, 
or rotate around a point $p$ by $\pi$. Two such elements commute in the following cases:

\begin{itemize}
\item $l_1$ and $l_2$ intersecting at right angle
\item $l \cap p \neq \phi$
\end{itemize}

An infinite order action must be a composition of these order-2 isometries as one of the following cases:

\begin{enumerate}
\item $l \cap p = \phi$
\item $l_1 \cap l_2 = \phi$
\item $p_1 \cap p_2 = \phi$
\end{enumerate}
 
In the $\Z^2$ subgroup, there are two elements of infinite order, both generators in one of the three pairs 
of elements commutes with both generators of another one, not necessarily different, of the three pairs.

(1) and (1), impossible since a point cannot simultaneously coincide with another point off the line and be on the line, 
For the same reason, (1) and (3) is also impossible.

(1) and (2), impossible since there is only one straight line that passes perpendicularly through another line
and a point off that line;

(2) and (3), impossible, since one point cannot be on two parallel lines;

(3) and (3), impossible since one point cannot coincides with two points.

Therefore the only possibility is (2) and (2): two pairs of parallel lines intersecting at right angle.
The defining of this group consists of four vertices and four edges connecting up to a four-gon.

To have this group
as a subgroup of finite index, by the Finite Index Lemma \cite{localcon2} we must have in the defining graph a complete graph joined to the chord-less 4-cycle. This is to say the generators not in the chord-less 4-cycle commutes with the four reflections. By the previous argument, there cannot be order-2 symmetries of the special flat that commutes with all four reflections. Therefore, if a right-angled Coxeter group acts geometrically on a special flat, the actions of the group restricted to the special flat is isomorphic to 
$$
G = D_{\infty} \times D_{\infty} = < a, b, c, d | a^2, b^2, c^2, d^2, [a, c], [a, d], [b,c], [b,d]>
$$
\end{proof}

\noindent A direct corollary is the following,

\begin{cor}
If a right-angled Coxeter group acts geometrically on a special flat, then it is isomorphic to $D_{\infty} \times D_{\infty}$. The $D_{\infty} \times D_{\infty}$ acts on the special flat like two pairs reflections cross lines. Each pair consists of two reflections
whose fixed-point sets are parallel axes, and the two pairs of axes intersect at right angle.
\end{cor}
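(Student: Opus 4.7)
The plan is to extract this corollary essentially verbatim from the statement and the case analysis inside the proof of the Key Lemma. The isomorphism type $W \cong D_{\infty} \times D_{\infty}$ is already the last line of that proof, so the only real work is to repackage the geometric picture that was produced along the way and to check that the full group $W$, rather than merely its image in $\textrm{Isom}(\mathbb{E}^2)$, carries this isomorphism type.

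First I would appeal to the Key Lemma to obtain four generators $s_1, s_2, t_1, t_2$ whose defining graph is an induced chord-less $4$-cycle, and whose infinite-order products $s_1 t_1$ and $s_2 t_2$ generate commuting copies of $\mathbb{Z}$. The case analysis in the Key Lemma ruled out every configuration except ``(2) and (2)'': $s_1 t_1$ is a translation realized as a composition of two reflections across parallel lines, and likewise for $s_2 t_2$, and the two pairs of parallel axes meet at a right angle. This is precisely the geometric picture asserted by the corollary, so no additional geometric analysis is required.

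Second I would confirm that $W$ is the whole group $D_{\infty} \times D_{\infty}$ and not some larger right-angled Coxeter group. Any additional generator would, by the Finite Index Lemma invoked at the end of the Key Lemma, have to commute with all four of $s_1, s_2, t_1, t_2$; but the case analysis already showed that no order-two isometry of the flat commutes with all four reflections across a perpendicular pair of parallel line families. Hence the defining graph of $W$ is exactly the $4$-cycle on the vertices $s_1, t_1, s_2, t_2$, and its presentation matches the one displayed in the statement.

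The step that deserves the most care is verifying that the action on the flat is \emph{faithful}, so that the abstract isomorphism type of $W$ really agrees with that of its image in $\textrm{Isom}(\mathbb{E}^2)$. Faithfulness follows from proper discontinuity of the action (Proposition 4.7): any kernel would be a finite normal subgroup, so by the bullet listing basic facts about right-angled Coxeter groups every nontrivial kernel element would square to the identity, and a short check using the chord-less $4$-cycle structure rules out any central involution. This closes the argument and delivers both the isomorphism claim and the explicit geometric realisation by two perpendicular pairs of parallel reflection axes.
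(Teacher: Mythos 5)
Your first two paragraphs reproduce exactly what the paper does: the corollary is given there with no separate proof (``A direct corollary is the following''), and every ingredient you cite --- the induced chord-less $4$-cycle, the case analysis eliminating all configurations except two perpendicular pairs of parallel reflection axes, and the Finite Index Lemma step showing no further generator can be added --- already lives inside the proof of the Key Lemma. On that portion you are fully aligned with the paper.

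The faithfulness step is your own addition, and the instinct behind it is good: the Key Lemma's conclusion is literally about ``the actions of the group restricted to the special flat,'' i.e.\ the image in $\textrm{Isom}(\mathbb{E}^2)$, whereas the corollary asserts an isomorphism type for the group itself. But the argument you sketch does not close. Proper discontinuity does force the kernel $N$ to be a finite normal subgroup, hence (for a right-angled Coxeter group) a central elementary abelian $2$-group generated by generators adjacent to every other vertex of the defining graph. The presence of the induced chord-less $4$-cycle does \emph{not} rule such a generator out: the defining graph could be a cone over the $4$-cycle, giving $W \cong D_{\infty} \times D_{\infty} \times \Z_2$ with the apex generator acting trivially on the flat. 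That group acts geometrically on $\mathbb{E}^2$ in the sense of the paper's definition (which does not require effectiveness) and is not isomorphic to $D_{\infty} \times D_{\infty}$, so the ``short check using the chord-less $4$-cycle structure'' cannot succeed as stated. To be fair, the paper has the same blind spot --- the Finite Index Lemma step in the Key Lemma tacitly assumes every additional generator acts as a \emph{nontrivial} order-two isometry of the flat. To repair this one must either build effectiveness into the hypothesis or, in the setting where the corollary is actually used ($Stab(\widetilde{T_i})$ acting on the Croke--Kleiner space), argue separately that no nontrivial element of $W$ fixes a special flat pointwise.
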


\noindent Next we study how the stabilizer subgroups piece together and determine the gluing angle of the complex.

\begin{thm}
If a right-angled Coxeter group acts geometrically on the Croke-Kleiner space and preserves special flats, then the angles $\theta_1, \theta_2, \theta_3$ must all be $\pi / 2 $.
\end{thm}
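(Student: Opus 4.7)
The plan is to apply the Key Lemma to the stabilizer of each special flat $\widetilde{T_i}$, $i\in\{1,2,3\}$, and combine the resulting rigid $D_\infty\times D_\infty$ structure with the $W$-invariance of the gluing-line families on each flat to constrain the three intersection angles. By Propositions~4.6 and~4.7 together with the Key Lemma, each $\operatorname{Stab}(\widetilde{T_i})$ is isomorphic to $D_\infty\times D_\infty$ and acts on $\widetilde{T_i}\cong\mathbb{E}^2$ as two perpendicular pairs of reflections in parallel axes; call the two axis directions $\vec{u}_i$ and $\vec{v}_i$. The underlying principle I will use is that a planar reflection permutes a family of parallel lines if and only if its axis is parallel or perpendicular to the common direction of the family, so every $W$-invariant family of parallel lines in $\widetilde{T_i}$ must be aligned with $\vec{u}_i$ or $\vec{v}_i$.

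On $\widetilde{T_2}$ the conclusion is immediate: the $b$-line family (intersections with adjacent $\widetilde{T_1}$-flats) and the $c$-line family (intersections with adjacent $\widetilde{T_3}$-flats) are both $W$-invariant, since $W$ preserves the collection of special flats and hence permutes the neighboring flats. These two transverse families meet at angle $\theta_2$, and by the alignment principle each must be parallel to $\vec{u}_2$ or $\vec{v}_2$; being distinct, they occupy the two orthogonal directions, forcing $\theta_2=\pi/2$. For $\widetilde{T_1}$ and $\widetilde{T_3}$ the same scheme applies once a second $W$-invariant transverse family is exhibited; for $\widetilde{T_1}$ this is the $a$-line family, whose invariance I would establish at a shared $b$-line $\ell$ by observing that a $c$-direction reflection in $\operatorname{Stab}(\widetilde{T_2})$ fixing $\ell$ setwise must, by uniqueness of the $\widetilde{T_1}$-flat containing $\ell$, also lie in $\operatorname{Stab}(\widetilde{T_1})$, and then checking that the rectangular $\mathbb{Z}^2$-translation sublattice of $\operatorname{Stab}(\widetilde{T_1})$ so obtained is commensurable with the deck lattice $\pi_1(T_1)$, which forces the latter parallelogram lattice to be rectangular. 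An identical argument on $\widetilde{T_3}$ gives $\theta_3=\pi/2$.

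The main obstacle is the commensurability step for $\widetilde{T_1}$ and $\widetilde{T_3}$, since $\pi_1(T_i)$ and the $W$-stabilizer are a priori unrelated subgroups of $\operatorname{Isom}(X)$; one must use the global gluing of barriers onto the flat to force them into a common normalizer of $\pi_1(T_1)$ in $\operatorname{Isom}(\mathbb{E}^2)$. Once that is done, the rigidity of the perpendicular $D_\infty\times D_\infty$-lattice rules out any oblique torus lattice and the conclusion $\theta_1=\theta_2=\theta_3=\pi/2$ follows.
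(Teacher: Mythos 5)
Your overall strategy --- reduce to the Key Lemma's rigid $D_\infty\times D_\infty$ picture on each special flat and then align the gluing-line families with the two perpendicular axis directions --- is the same as the paper's, but your argument for $\theta_2$ assumes away exactly the case that the paper's proof is devoted to excluding. From ``$W$ preserves the collection of special flats'' you only get that the \emph{union} of the $b$-line family and the $c$-line family in $\widetilde{T_2}$ is invariant: an element of $\operatorname{Stab}(\widetilde{T_2})$ may swap lifts of $T_1$ with lifts of $T_3$ and hence swap the two families. A reflection that swaps two transverse families of parallel lines has its axis along an angle bisector (a ``diagonal'' of the fundamental parallelogram), and since the two bisectors of a pair of transverse directions are always mutually perpendicular, four such diagonal reflections can realize the Key Lemma configuration with $\theta_2\neq\pi/2$ (for instance when the two families have equal spacing). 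Your alignment principle says nothing against this. The paper eliminates this configuration by a separate cross-flat argument: a generator of $\operatorname{Stab}(\widetilde{T_1})$ whose axis is perpendicular to a shared gluing line $l_{2,1}$ restricts to $l_{2,1}$ as a point reflection; viewed inside the adjacent lift of $T_2$, this forces a single reflection whose axis meets $l_{2,1}$ and preserves it, which in the diagonal configuration would require a third axis direction and so contradicts the Key Lemma. Some version of that step is indispensable, and it is missing from your proposal.

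Second, your route to $\theta_1=\theta_3=\pi/2$ via commensurability would fail even if the commensurability were established: a non-rectangular lattice can contain a rectangular sublattice of finite index with one axis in the $b_1$-direction (e.g.\ $\langle(1,0),(1/2,1)\rangle$ contains $\langle(1,0),(0,2)\rangle$ with index $2$), so commensurability of $\pi_1(T_1)$ with the rectangular translation lattice of $\operatorname{Stab}(\widetilde{T_1})$ does not force the parallelogram spanned by $a$ and $b_1$ to be rectangular; you would need the reflections themselves, not just the translation lattice, to normalize $\pi_1(T_1)$. (In fairness, the paper's own proof also only argues the middle-torus angle in detail: its first paragraph pins the reflection axes in a lift of $T_1$ to be parallel or perpendicular to the $l_{2,1}$'s and then moves on, so the $\theta_1,\theta_3$ assertions are not fully justified there either.)
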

\begin{proof}
Consider a special flat that is a lift of either $T_1$ or $T_3$, without loss of generality, let it be $T_1$. Each of these flats is adjacent to countably lifts of $T_2$. The intersections we can label $l_{2,1}$. The $l_{2,1}$s are cosets of $\Z$ and therefore are parallel, bi-infinite geodesic rays. Since any nontrivial action, when restricted to a special flat, is by reflection, there is necessarily an axis of reflection. If an axis of reflection is at an angle $\theta \neq 0, \pi/2$ with the $l_{2,1}$s, then the reflection takes a copy of $l_{2,1}$ to its image, which is not a copy of $l_{2,1}$. Since the boundary of $l_{2,1}$ is a pair of cone points, and isometry of the space induces homeomorphisms on the boundary, we arrive at a contradiction. therefore the axes of reflection is at angle 0 or $\pi / 2$ with the $l_{2,1}$s. By Key Lemma, the reflection axes are two pairs of parallel lines, and one pair is perpendicular to the other. therefore the reflection axes in a special flat labeled by $T_1, T_3$ is either parallel or perpendicular to the $l_{2,1}$s. Furthermore, since each reflection fixes a plane, by Lemma 4.5, each reflection is a (conjugate of a) generator of the group $W$.\\

Now consider the special flats that are lifts of the middle torus $T_2$. In these flats, there are two sets of intersections with neighboring special flats, labeled accordingly $l_{2,1}$ and $l_{2,3}$. All the $l_{2,1}$s are parallel to one another; all the $l_{2,3}$s are parallel to one another. Consider the angle $\theta$ between $l_{2,1}$ and $l_{2,3}$. Suppose $\theta \neq \pi/2$, then the only possibility for a set of four reflection, configured in the way specified in Key Lemma, can take intersections to intersections is to have them reflect across the diagonals of the unit parallelograms in the special flat,  as shown in Figure 6. In Figure 6,  the solid lines are $l_{2,1}$ and $l_{2,3}$, the dashed lines are the axes of reflections.

\begin{figure}[h]
\begin{center}
\includegraphics[scale=0.3]{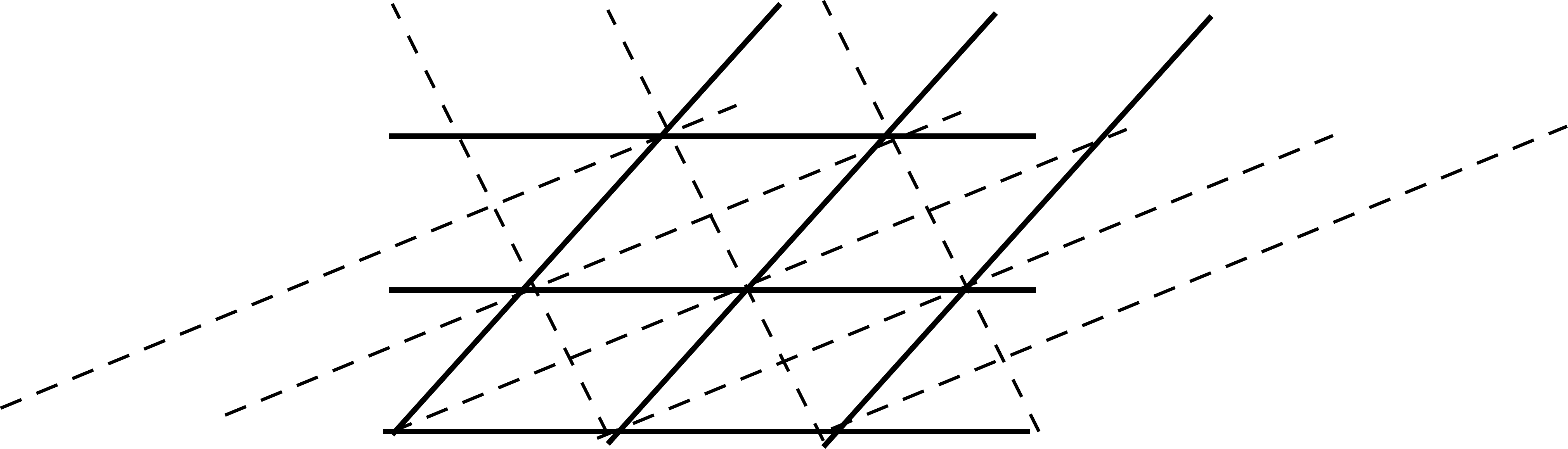}
\end{center}
\caption{Gluing Theorem}
\end{figure}

In this case, it takes a two-letter word to reflect $l_{2,1}$ onto itself across a point.
We argued in the first paragraph that there are generators that reflect $l_{2,1}$ to itself across a point. Since $l_{2,1}$ is also in the flat that is a lift of $T_2$, the same generator then act as reflection on the corresponding $T_2$ and its axis intersects $l_{2,1}$. However there are already reflection axes intersecting $l_{2,1}$ as established in the previous paragraph and neither of them reflect $l_{2,1}$ onto itself. Therefore we need to have a third reflection axis that is not parallel to the two existing axes. This configuration contradicts the Key Lemma. Therefore, it is not possible to have the intersection angle of $l_{2,1}$ and $l_{2,3}$ be $\theta \neq \pi / 2 $.

\end{proof}

\head{Remark.}
The main theorem states that if a right-angled Coxeter group acts geometrically on a Croke-Kleiner space and preserves special flats, then the angle must be fixed at $\frac{\pi}{2}$, therefore, the only parameters for the group action are the length data. We know that changing the length data changes the $G$-equivariant homeomorphism type of the boundary \cite{yulan}, it suffices to verify that changing the length data does not violates the requirement of a isometric, properly discontinuous, and cocompact action.

Since we can vary the distances between two parallel reflecting axes on each special flat, we can indeed obtain actions of varied "translation lengths" and the action is still geometric, therefore, we can conclude from this and \cite{yulan} the following result:

\begin{prop}
If a right-angled Coxeter group acts geometrically on the Croke-Kleiner space $\widetilde{X}$, then changing 
the lengths data of the tori changes the equivariant type of it visual boundary. There are uncountably many equivariant visual boundaries of of the space.
\end{prop}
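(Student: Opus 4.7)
The plan is to combine Theorem 4.9 with the result of \cite{yulan} cited in the remark preceding the statement. Theorem 4.9 forces $\theta_1=\theta_2=\theta_3=\pi/2$, so the only remaining geometric parameters for the Croke--Kleiner space are the four translation lengths $a,b,c,d$ of the generating curves. The two tasks are therefore: first, verify that for each choice of positive lengths the same right-angled Coxeter group $W$ still admits a geometric action on the resulting Croke--Kleiner space $\widetilde X$ preserving special flats; second, conclude from \cite{yulan} that distinct length tuples give non-equivariantly-homeomorphic boundaries, and count.

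For the first task I would show, flat by flat, that the model action is insensitive to the lengths. By the Key Lemma and Corollary 4.8, on each special flat $\widetilde T_i$ the stabilizer acts as $D_\infty\times D_\infty$ via two pairs of mutually perpendicular families of parallel reflection axes; scaling the spacing between consecutive axes in either family produces an isometric, properly discontinuous, cocompact action on any Euclidean plane, with the corresponding rectangle as fundamental domain. Because adjacent special flats meet along a gluing line $l_{2,1}$ or $l_{2,3}$, which by the argument of Theorem 4.9 is itself a reflection axis (or is parallel to one) in each of the two flats, changing the length data on one flat is compatible with the reflections coming from the neighboring flat: the shared axis continues to be an axis on both sides. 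Hence $W$ acts on the deformed $\widetilde X$ cellularly, properly, cocompactly, and by isometries, for every positive tuple $(a,b,c,d)$.

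Having secured a genuine geometric action of $W$ on $\widetilde X(a,b,c,d)$ for each positive tuple, I would then invoke \cite{yulan}, which establishes that if the gluing angles are all $\pi/2$ then varying the lengths changes the $W$-equivariant homeomorphism type of $\ibd\widetilde X$. This directly gives the first assertion of the proposition. For the counting statement, the parameter space $(0,\infty)^4$ is uncountable, and \cite{yulan} furnishes (at least) a partition into uncountably many equivariant equivalence classes, e.g. by exhibiting a continuous invariant of the boundary that depends nontrivially on a length ratio. Thus there are uncountably many distinct equivariant boundaries, proving the proposition.

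The main obstacle I foresee is the compatibility check in the first step: one must confirm that after changing $|a|,|b|,|c|,|d|$ the reflection axes produced on two adjacent special flats still glue to global isometries of $\widetilde X$, and that the quotient remains compact. This is a verification rather than a deep argument, but it is the only place where anything could go wrong, since the pointwise algebra of $D_\infty\times D_\infty$ and the invariant of \cite{yulan} are already in hand.
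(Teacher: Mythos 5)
Your proposal follows essentially the same route as the paper: the paper's (very brief) justification is exactly to combine the main theorem's rigidity of the angles with the result of \cite{yulan} on length variation, and to note that varying the spacing between parallel reflection axes on each special flat keeps the action geometric. Your elaboration of the compatibility check along the gluing lines and the explicit counting via the parameter space $(0,\infty)^4$ only fills in details the paper leaves implicit, so this is the same argument, carried out somewhat more carefully.
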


\section{A Concrete Action}

As discussed in the introduction, we assumed without verifying that there does exist a right-angled Coxeter group that acts geometrically on the Croke-Kleiner space. In this section a specific right-angled Coxeter group is given. In general there may be more than one right-angled Coxeter groups that is quasi-isometric to the Croke-Kleiner space. Let $W$ be the right-angled Coxeter group defined by the following graph:

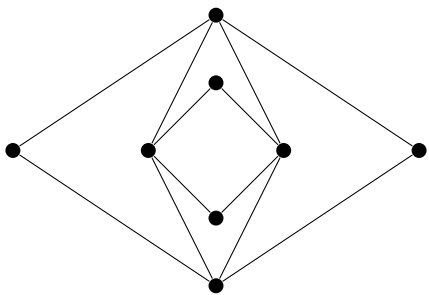
\begin{figure}[h]
\begin{center}
\begin{tikzpicture}[scale=0.9]

\node (v1) [circle,fill,inner sep=2pt] at (-1,0) {};
\node (v2) [circle,fill,inner sep=2pt] at (0,1) {};
\node (v3) [circle,fill,inner sep=2pt] at (1,0) {};
\node (v4) [circle,fill,inner sep=2pt] at (0,-1) {};

\draw (v1) -- (v2) -- (v3) -- (v4) -- (v1);

\node (w1) [circle,fill,inner sep=2pt] at (-3,0) {};
\node (w2) [circle,fill,inner sep=2pt] at (0,2) {};
\node (w3) [circle,fill,inner sep=2pt] at (3,0) {};
\node (w4) [circle,fill,inner sep=2pt] at (0,-2) {};

\draw (w1) -- (w2) -- (w3) -- (w4) -- (w1);

\draw (v1) -- (w2) -- (v3) -- (w4) -- (v1);

\end{tikzpicture}
\end{center}
\caption{Defining graph of a right-angled Coxeter group}
\label{}
\end{figure}

Consider the Cayley graph of $W$ with respect to this generating set. There are three "diamonds". Each "diamond" in the defining graph corresponds to an  $ H = D_{\infty} \times D_{\infty}$ whose Caley graph is a $\Z \times \Z $ lattice with each edge being replaced by a double-edge, i.e. a pair of edges that shares starting and ending vertices. Three diamonds generates three types of such double-edged lattices. These lattices are identified along double-edged $\Z-$lines according to the amalgamated product decomposition:
$$  
W = H *_{D_{\infty}} H *_{D_{\infty}}H 
$$

We observe that the Caley graph embeds into the Croke-Kleiner space as its dual: each vertex of the Cayley graph represent a unit square in the Croke Kleiner space and two vertices of the Cayley graph are connected by a pair of double-edges if and only if the corresponding two unit squares share a common edge in the Croke-Kleiner space. We observe that group $W$ acts in its own Cayley graph by reflecting through the mid-point of the double edges, and it is easy to check that the induced action on the Croke-Kleiner space in which it embeds is indeed a geometric action.\\

\bibliographystyle{plain}
\bibliography{bib1}

\addcontentsline{toc}{chapter}{Bibliography} 

\end{document}